\newcommand\shorttitle{Primes in higher-order progressions on average}
\newcommand\authors{\small Nian Hong Zhou}
\ifodd\value{page}
\authors
\shorttitle
\def\ssum{\mathop{\sum\!\sum}}
\def\qb{\mathbb{Q}}
\def\nb{\mathbb{N}}
\def\zb{\mathbb{Z}}
\renewcommand{\mod}{\mathop{\rm{mod}}}
\numberwithin{equation}{section}
\newtheorem{theorem}{Theorem}[section]
\newtheorem{lemma}[theorem]{Lemma}
\numberwithin{equation}{section}
\def\rint{\int\limits}
\title{\large \bf PRIMES IN HIGHER--ORDER PROGRESSIONS ON AVERAGE}
\author{\small NIAN HONG ZHOU}
\date{}
\begin{document}

\maketitle

%\subjclass[2010]{Primary: 11P55, Secondary: 11P32, 11N13, 11N32}
\begin{abstract}
In this paper, we establish some theorems on the distribution of primes in higher-order progressions on average.
\end{abstract}
%\keywords{primes represented by polynomials; Bateman-Horn conjecture; circle method.}
%\maketitle

%\linenumbers

\section{Introduction}
The Bateman-Horn conjecture \cite{MR0148632} suggests that if $x^{\ell}+u\in\zb[x]$ be irreducible polynomial with $u$ be an even number and the degree $\ell\ge 1$, then
\begin{equation} \label{bhconj}
 \sum_{m \leq X} \Lambda(m)\Lambda(m^{\ell}+u) \sim \prod_p\left\{ \left( 1-\frac{1}{p}\right)^{-2}\left(1-\frac{n_{\ell}(p,u)}{p} \right)\right\} X,
 \end{equation}
where $\Lambda$ denotes the von Mangoldt function, $p$ stands for primes and $n_{\ell}(p,u)$ being the number of solutions of the congruence $x(x^{\ell}+u)\equiv 0 ~(\mod p)$.

If $\ell=1$, the asymptotic formula in \eqref{bhconj} is the twin prime conjecture. However, even the simple case seems beyond the current approach. In 1970, Lavrik \cite{MR0159801} proved that if $\ell=1$, then given any $A>0$, \eqref{bhconj} holds for all even integer $u\ge 1$ not exceeding $X$ with at most
$O \left( X (\log X)^{-A} \right)$ exceptions.

In \cite{MR2317757}, S. Baier and L. Zhao established certain theorems for the Bateman-Horn conjecture for quadratic polynomials on average. Their main result states the following. Given $A,B>0$, we have, for $x^2 (\log x)^{-A} \leq y \le x^2$,
\begin{equation*}
\sum_{\substack{k \le y\\ \mu^2(k)=1}} \left| \sum_{n \le x} \Lambda(n^2+k) - \mathfrak{S}(k) x \right|^2\ll {yx^2}{(\log x)^{-B}},
\end{equation*}
where
\[ \mathfrak{S}(k) = \prod_{p>2} \left( 1 - \frac{1}{p-1}\left(\frac{-k}{p} \right) \right) \]
with $\left( \frac{-k}{p} \right)$ being the Legendre symbol. In \cite{MR3054332}, F. Too and L. Zhao established similar results for the cubic cases.

In this paper, we shall study the asymptotic formula in \eqref{bhconj} on average.  Our main results are as follows.
\begin{theorem}\label{main}
Let integer $\ell\ge 2$. For any $A> 0$, there exists a $B=B_{\ell}(A)$ such that
\[\sum_{1 \le u \le y}\left| \sum_{m \le \sqrt[\ell]{X}}\Lambda(m^{\ell} + u)\Lambda(m) - {\frak S}_{\ell} (u)\sqrt[\ell]{X}\right|^2 \ll \frac{yX^{\frac{2}{\ell}}}{\log^{A}X}\]
holds for any $y\in (X^{1-\frac{1}{2\ell}}(\log X)^B,X]$, where
\[ {\frak S}_{\ell}(u) =\prod_{p|u}\frac{p-\varrho_{\ell}(p,u)}{p-1-\varrho_{\ell}(p,u)}\prod_{p}\left(1-\frac{\varrho_{\ell}(p,u)-1}{p-1}-\frac{\varrho_{\ell}(p,u)}{(p-1)^2}\right), \]
$p$ stands for primes and $\varrho_{\ell}(p,u)$ being the number of solutions of the congruence $x^{\ell}+u\equiv 0 ~(\mod p)$.
\end{theorem}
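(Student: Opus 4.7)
The plan is to prove Theorem~\ref{main} via the Hardy-Littlewood circle method, in the spirit of Baier-Zhao and Too-Zhao. Set $M:=\lfloor X^{1/\ell}\rfloor$ and, by orthogonality, write
\[
S(u):=\sum_{m \le M}\Lambda(m)\Lambda(m^\ell+u)=\int_0^1 V(\alpha)T(-\alpha)e(-u\alpha)\,d\alpha,
\]
where $V(\alpha):=\sum_{m\le M}\Lambda(m)e(m^\ell\alpha)$ and $T(\alpha):=\sum_{n\le X+y}\Lambda(n)e(n\alpha)$. For a large constant $D=D(A,\ell)$, I would split $[0,1]$ into major arcs $\mathcal{M}$ (Farey neighborhoods of fractions $a/q$ with $(a,q)=1$, $q\le(\log X)^D$) and minor arcs $\mathfrak{m}=[0,1]\setminus\mathcal{M}$. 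On each major arc, Vaughan's identity combined with the Siegel-Walfisz theorem gives the usual approximations for $V$ and $T$ in terms of $\mu(q)/\phi(q)$ and the Gauss-type sum $\sum_{(b,q)=1}e(ab^\ell/q)$. Substituting, integrating in $\beta$, summing over $a\bmod q$, and completing the truncated $q$-series to its Euler product produces the claimed main term ${\frak S}_\ell(u)\sqrt[\ell]{X}$ (the local factors arising from the character-sum arithmetic and the densities $\varrho_\ell(p,u)$) together with an error $O_A(M(\log X)^{-A})$ uniform in $u$; squared and summed over $u\le y$ this contributes $\ll yX^{2/\ell}(\log X)^{-2A}$, as required.

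For the minor-arc part $E_\mathfrak{m}(u):=\int_\mathfrak{m} V(\alpha)T(-\alpha)e(-u\alpha)\,d\alpha$, Bessel's inequality yields
\[
\sum_{1\le u\le y}|E_\mathfrak{m}(u)|^2 \le \int_\mathfrak{m}|V(\alpha)|^2|T(\alpha)|^2\,d\alpha \le \Bigl(\sup_{\alpha\in\mathfrak{m}}|V(\alpha)|\Bigr)^2\int_0^1|T(\alpha)|^2\,d\alpha.
\]
Since $\int|T|^2\ll X\log X$, matching the target $\ll yX^{2/\ell}(\log X)^{-A}$ reduces the minor-arc estimate to a Weyl-type pointwise bound $\sup_\mathfrak{m}|V(\alpha)|\ll M^{3/4}(\log X)^{-C}$ for a $C=C(A,\ell)$ large. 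For $\ell\in\{2,3\}$ this follows from Weyl's inequality (which yields exponent $\sigma_\ell=2^{1-\ell}\ge 1/4$) applied to $V$ via Vaughan's identity in $\Lambda$. For $\ell\ge 4$, where the classical Weyl exponent $2^{1-\ell}<1/4$, one has to combine the Weyl-Vaughan bound with higher-moment input: Hua's lemma $\int|V|^{2^\ell}\,d\alpha\ll M^{2^\ell-\ell+\epsilon}$ together with Hölder, and, if needed, Wooley's pointwise Weyl-sum estimates coming from the Vinogradov mean value theorem.

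The principal obstacle is exactly this minor-arc bound for $\ell\ge 4$: the hypothesis $y>X^{1-1/(2\ell)}(\log X)^B$ is precisely the break-even threshold at which $\sup_\mathfrak{m}|V|^2\cdot\int|T|^2$ balances $yX^{2/\ell}$ under the Weyl-type exponent $\sigma=1/4$, so any weakening of the available pointwise bound on $V$ must be absorbed into either the logarithmic savings of Siegel-Walfisz on $T$ or the mean-value bounds on $V$. Once this is in hand, the theorem follows by $|S(u)-{\frak S}_\ell(u)M|^2\le 2|E_\mathcal{M}(u)|^2+2|E_\mathfrak{m}(u)|^2$ (where $E_\mathcal{M}$ denotes the major-arc error), combining the minor-arc variance bound with the major-arc error estimate; $B=B_\ell(A)$ is chosen large enough to accommodate the logarithmic losses coming from Vaughan's identity, Siegel-Walfisz, and the higher-moment inputs.
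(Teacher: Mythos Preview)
Your architecture (circle method, major/minor split) matches the paper, but both halves of the argument have genuine gaps.

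\textbf{Minor arcs.} With major arcs cut at $q\le(\log X)^D$, the pointwise bound available on $\mathfrak m$ for $V(\alpha)=\sum_{m\le M}\Lambda(m)e(m^\ell\alpha)$ is only $V\ll M(\log X)^{-C}$; Weyl's inequality gives a saving of $(1/q)^{2^{1-\ell}}$, and on $\mathfrak m$ one merely knows $q>(\log X)^D$, so no power of $M$ is gained. Thus your Bessel reduction $\sum_u|E_{\mathfrak m}(u)|^2\le(\sup_{\mathfrak m}|V|)^2\int|T|^2$ yields at best $X^{1+2/\ell}L^{-C}$, which matches the target $yX^{2/\ell}L^{-A}$ only when $y>X L^{-C}$, not down to $y>X^{1-1/(2\ell)}L^B$. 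Neither Hua's lemma with H\"older nor Wooley's Vinogradov-type pointwise bounds rescue this: they still feed through the same $\sup$, and no known pointwise estimate gives $|V|\ll M^{3/4}$ on minor arcs defined by a logarithmic threshold. The paper proceeds differently: it expands the square, inserts $\min(y,\|\alpha-\beta\|^{-1})$, partitions $[0,1]$ into $\asymp y$ subintervals, and applies Gallagher's lemma to convert $\int_{\mathfrak m\cap H_i}|J_\ell|^2$ into an $L^2$ bound for $\sum_{t<m^\ell\le t+H}\Lambda(m)e(am^\ell/q)$ over short intervals $H\asymp y$. That $L^2$ bound is supplied by a result of Bauer (Lemma~\ref{t24}), and it is this mean-value input, not any pointwise Weyl saving, that produces the exponent $1-1/(2\ell)$.

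\textbf{Major arcs.} Your claim that completing the truncated singular series gives an error $O_A(M(\log X)^{-A})$ \emph{uniformly in $u$} is unjustified and is in fact the heart of the major-arc difficulty. The tail $\mathfrak S_\ell(u)-\mathfrak S_\ell(u,Q)$ is governed by $\sum_{p>Q}(\varrho_\ell(p,u)-1)/p$, whose size depends on the error term in the prime ideal theorem for $\mathbb Q(\sqrt[\ell]{-u})$ and is not uniformly small; even the crude bound $\mathfrak S_\ell(u)\ll(\log|u|)^{5\ell}$ (Lemma~\ref{t51}) already requires Landau's prime ideal theorem. The paper controls this only on average, proving $\sum_{u\le y}|\mathfrak S_\ell(u,Q)-\mathfrak S_\ell(u)|^2\ll yQ^{-c/\ell^2}$ (Lemmas~\ref{t52} and~\ref{t53}), which ultimately rests on Kawada's zero-density estimate for Dedekind zeta functions of pure extension fields (Lemma~\ref{t21}). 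Without this averaging input the major-arc contribution does not close.
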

By similar arguments, we have the following theorem which improves the results in \cite{MR2317757} and \cite{MR3054332}.
\begin{theorem}\label{main1}
Let integer $\ell\ge 2$. For any $A> 0$ and $\varrho_{\ell}(p,u)$ as defined in the Theorem 1.1. Then there exists a $B^{\prime}=B^{\prime}_{\ell}(A)$ such that
\[\sum_{1 \le u \le y}\left| \sum_{m \le \sqrt[\ell]{X}}\Lambda(m^{\ell} + u) - {\frak S}_{\ell}^{\prime} (u)\sqrt[\ell]{X}\right|^2 \ll \frac{yX^{\frac{2}{\ell}}}{\log^{A}X}\]
holds for any $y\in (X^{1-\frac{1}{\ell}}(\log X)^{B^{\prime}},X]$ with
\[ {\frak S}_{\ell}^{\prime}(u) = \prod_{p}\left(1-\frac{\varrho_{\ell}(p,u)-1}{p-1}\right) \]
and the product being taken over all primes.
\end{theorem}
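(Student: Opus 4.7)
The plan is to follow the circle-method strategy used for Theorem~\ref{main}, exploiting the simplification that the outer summation variable carries no $\Lambda$ weight and the relevant Weyl-type exponential sum is thereby easier to control. Write $M=\sqrt[\ell]{X}$, $N=X+y$, and introduce the generating functions
\[
f(\alpha)=\sum_{m\le M}e(m^{\ell}\alpha),\qquad g(\alpha)=\sum_{n\le N}\Lambda(n)e(n\alpha).
\]
By orthogonality, $\sum_{m\le M}\Lambda(m^{\ell}+u)=\int_{0}^{1}f(-\alpha)g(\alpha)e(-u\alpha)\,d\alpha$ for $1\le u\le y$. With the model function $\mathcal{M}(\alpha)=M\sum_{1\le u\le y}\mathfrak{S}^{\prime}_{\ell}(u)e(u\alpha)$, Parseval on the torus reduces Theorem~\ref{main1} to the $L^{2}$ estimate
\[
\int_{0}^{1}\bigl|f(-\alpha)g(\alpha)-\mathcal{M}(\alpha)\bigr|^{2}\,d\alpha\ll \frac{yX^{2/\ell}}{(\log X)^{A}},
\]
up to boundary terms from $u\notin[1,y]$ that are handled by trivial bounds on $\sum_{m}\Lambda(m^{\ell}+u)$.

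I will decompose $[0,1]=\mathfrak{M}\cup\mathfrak{m}$ via Farey arcs at level $Q=(\log X)^{B^{\prime}}$, with $B^{\prime}=B^{\prime}_{\ell}(A)$ to be calibrated. On $\mathfrak{M}$, writing $\alpha=a/q+\beta$, Siegel--Walfisz gives $g(a/q+\beta)=(\mu(q)/\phi(q))\int_{0}^{N}e(\beta t)\,dt+O(N\exp(-c\sqrt{\log N}))$, while the classical factorisation gives $f(a/q+\beta)=q^{-1}S_{\ell}(a,q)\int_{0}^{M}e(\beta t^{\ell})\,dt+O(q)$, with the Gauss-type sum $S_{\ell}(a,q)=\sum_{r\pmod q}e(ar^{\ell}/q)$. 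Multiplying the two approximations and summing over $(a,q)=1$, $q\le Q$, produces the Euler product defining $\mathfrak{S}^{\prime}_{\ell}(u)$ via the Ramanujan-style identity for $(q\phi(q))^{-1}\sum_{(a,q)=1}\overline{S_{\ell}(a,q)}e(au/q)$, whose local factors at primes $p\mid q$ reproduce $1-(\varrho_{\ell}(p,u)-1)/(p-1)$; the tail $q>Q$ is negligible by $\varrho_{\ell}(p,u)\le\ell$ and rapid convergence of the product. The major-arc $L^{2}$ error is then absorbed into the allowed bound.

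The crux of the proof is the minor-arc contribution $\int_{\mathfrak{m}}|fg|^{2}d\alpha$, since $\mathcal{M}$ is essentially supported on $\mathfrak{M}$. A naive estimate $\sup_{\mathfrak{m}}|f|^{2}\cdot\int_{0}^{1}|g|^{2}d\alpha$ via Weyl's inequality yields only $X^{1+2/\ell}/(\log X)^{C}$, which is off by a factor $X^{1/\ell}$ from the target at the lower end $y\sim X^{1-1/\ell}(\log X)^{B^{\prime}}$. To recover this loss I expand
\[
\int_{0}^{1}|fg|^{2}\,d\alpha=\sum_{m_{1},m_{2}\le M}\sum_{\substack{n_{1},n_{2}\le N\\ n_{1}-n_{2}=m_{1}^{\ell}-m_{2}^{\ell}}}\Lambda(n_{1})\Lambda(n_{2}),
\]
split off the diagonal $m_{1}=m_{2}$ (contributing $\ll MN\log N\ll X^{1+1/\ell}\log X$), and for each off-diagonal pair invoke the Hardy--Littlewood correlation $\sum_{n\le N}\Lambda(n)\Lambda(n+h)=\mathfrak{S}(h)N+E(h,N)$ with $h=m_{1}^{\ell}-m_{2}^{\ell}$. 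Weighted by $r_{\ell}(h)=\#\{(m_{1},m_{2}):m_{1}^{\ell}-m_{2}^{\ell}=h\}$, the $\mathfrak{S}(h)N$-terms combine with $\int|\mathcal{M}|^{2}d\alpha$ and the Parseval cross term to cancel up to acceptable error; the residual $\sum_{h}r_{\ell}(h)E(h,N)$ is controlled by Cauchy--Schwarz against a Lavrik-type average $\sum_{|h|\le X}|E(h,N)|^{2}\ll XN^{2}/(\log N)^{A_{1}}$ and divisor-style bounds for $\sum_{h}r_{\ell}(h)^{2}$. The main obstacle is this final step: applying the on-average Hardy--Littlewood bound to gaps $|h|\le X$ at summation length $N\asymp X+y$ requires precisely $y\ge X^{1-1/\ell}(\log X)^{B^{\prime}}$, which is the threshold appearing in the statement.
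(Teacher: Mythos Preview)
Your Parseval reduction fails. With $N=X+y$ and $\mathcal{M}$ supported only on frequencies $u\in[1,y]$, Parseval gives
\[
\int_{0}^{1}\bigl|f(-\alpha)g(\alpha)-\mathcal{M}(\alpha)\bigr|^{2}\,d\alpha
=\sum_{1\le u\le y}\bigl|\cdots\bigr|^{2}
+\sum_{u\notin[1,y]}\Bigl|\sum_{m:\,1\le m^{\ell}+u\le N}\Lambda(m^{\ell}+u)\Bigr|^{2},
\]
and the second sum is not a boundary effect: for each integer $u\in(y,\,y+X/2]$ the inner sum is $\asymp M$, so this block alone contributes $\asymp X M^{2}=X^{1+2/\ell}$, which dwarfs $yX^{2/\ell}(\log X)^{-A}$ whenever $y=o(X)$. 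The $L^{2}$ inequality you propose to prove is therefore false in the very range the theorem addresses, and the reduction collapses.

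Even setting this aside, your minor-arc plan does not reach the threshold $y\gg X^{1-1/\ell}$. The differences $h=m_{1}^{\ell}-m_{2}^{\ell}$ range over $|h|\le X$, Lavrik gives $\sum_{|h|\le X}|E(h,N)|^{2}\ll X^{3}(\log X)^{-A_{1}}$, and $\sum_{h}r_{\ell}(h)^{2}=\int_{0}^{1}|f|^{4}\asymp M^{2}$ for $\ell\ge 3$; Cauchy--Schwarz then yields an error $\ll X^{3/2+1/\ell}(\log X)^{-A_{1}/2}$, forcing $y\gg X^{3/2-1/\ell}$, which for $\ell\ge 3$ already exceeds $X$. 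Moreover, the cancellation you assert between $N\sum_{h}r_{\ell}(h)\mathfrak{S}(h)$, $\int|\mathcal{M}|^{2}$ and the cross term is not justified: that cross term is $2M\sum_{u}\mathfrak{S}'_{\ell}(u)\sum_{m}\Lambda(m^{\ell}+u)$, a weighted version of the quantity you are trying to estimate, so the argument is circular.

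The paper avoids both issues by never passing to a global $L^{2}$ bound. On the minor arcs it keeps the sum over $u\le y$ explicit, expands the square to produce $\sum_{u\le y}e(u(\alpha-\beta))\ll\min(y,\|\alpha-\beta\|^{-1})$, dissects $[0,1]$ into $\asymp y$ subintervals $H_{i}$ of length $y^{-1}$, and by Cauchy reduces matters to
\[
\max_{i}\int_{\mathfrak{m}\cap H_{i}}\bigl|I_{\ell}(\beta,z)\bigr|^{2}\,d\beta .
\]
Gallagher's lemma converts this to a mean square of the Weyl sum over short intervals of length $\asymp y$, and a Weyl-differencing estimate for $\sum_{t<m^{\ell}\le t+H}e(m^{\ell}a/q)$ (Lemma~\ref{t25}) supplies the saving; the hypothesis $H>z^{1-1/\ell}L^{B}$ there is exactly the source of the condition $y>X^{1-1/\ell}(\log X)^{B'}$. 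Finally, the singular-series tail $q>Q$ does not converge rapidly in any pointwise sense, since $\sum_{p}(\varrho_{\ell}(p,u)-1)/p$ converges only conditionally; the paper controls $\sum_{u}|\mathfrak{S}'_{\ell}(u,L^{B})-\mathfrak{S}'_{\ell}(u)|^{2}$ on average via Kawada's zero-density input (Lemma~\ref{t21}), not by the trivial bound $\varrho_{\ell}(p,u)\le\ell$.
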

The primary technique used in the proof of Theorem \ref{main} is the circle method and the using of a variant of Weyl's inequality. The main difficulty in this application of the circle method is with the singular series. As for the asymptotic conjecture \eqref{bhconj}, the coefficient
${\frak S}_{\ell}(u)$ involves the using of Dedekind zeta functions associated to suitable algebraic number fields of the form $\qb[\sqrt[\ell]{u}]$. On the other hand, let $p, q$ denote primes and we observe that left of \eqref{bhconj} means that one can give an estimate for
\[\#\{q\in\nb: q^{\ell}+u=p, q\le X\}=\#\{q\in\nb: p-q^{\ell}=u, q\le X\}.\]
Which similar with the Hardy-Littlewood conjecture \cite{MR1555183}, say every sufficiently large number is either an $\ell$-$th$ power or a sum of
a prime number and an $\ell$-$th$ power, for $\ell = 2,3$. When the circle method be used, in fact there is no big difference between them. Therefore when $\ell\ge 2$, the singular series similar to the singular series of Zaccagnini \cite{MR1203294}, which first give a crude estimates for the kinds of singular series.  In \cite{MR1249438}, Kawada announced that he could obtain an asymptotic formula for the number of representations
of numbers as the sum of a prime and an $\ell$-$th$ power on average, and give a detailed proof in \cite{MR1650745} by use of the analytic properties of the Dedekind zeta function. Based on this result and under Generalized Riemann Hypothesis, Br\"{u}dern \cite{MR2477986} give an asymptotic formula for the number of representations of numbers as the sum of a prime and an $\ell$-$th$ power of a prime on average.

Furthermore, combined with the work of Perelli, Zaccagnini \cite{MR1328560} and Bauer \cite{MR1762244}, we can have a good treatment for the minor arcs. Hence we get the proof of our main theorem.
\paragraph{Notations.}
Notation is standard or otherwise introduced when appropriate. The symbols $\zb$ and $\qb$ denote the set of integers and rational numbers, respectively. $e(z)=e^{2\pi i z}$, the letter $p$ always denotes a prime. The symbol $\zb_q$
represents shorthand for the groups $\zb/q\zb$. Also, the shorthand for the multiplicative
group composed by reduced residue classes $(\zb/q\zb)^*$ is $\zb_q^*$. Denote by $\varphi$ and $\Lambda$ the Euler and von Mangoldt functions, respectively. For a large number $X$, denote $L=\log X$. For the sake of simplicity, we set
\[I_{\ell}(\alpha,z)=\sum_{z<m^{\ell}\le 2^{\ell}z}e\left(m^{\ell}\alpha\right),\;\; J_{\ell}(\alpha,z)=\sum_{z<m^{\ell}\le 2^{\ell} z}\Lambda(m)e\left(m^{\ell}\alpha\right),\]
\[I(\alpha,z)=\sum_{m\le 2^{\ell}z}e\left(-m\alpha\right)\;\mbox{and}\;J(\alpha,z)=\sum_{m\le 2^{\ell}z}\Lambda(m)e\left(-m\alpha\right).\] Further, we set
\[\lambda(q,u)=\frac{1}{q}\sum_{a\in\zb_q^*}\sum_{h\in\zb_q}e\left(\frac{a(h^{\ell}+u)}{q}\right)\;\mbox{and}\; A(q,u)=\frac{1}{\varphi(q)}\sum_{a\in\zb_q^*}\sum_{h\in\zb_q^*}e\left(\frac{a(h^{\ell}+u)}{q}\right).\]
It is easily seen that both $\lambda(q,u)$ and $A(q,u)$ are multiplicative function with respect to positive integer $q$. It is obvious that
\[\lambda(q,u)=\prod_{p|q}\left(\varrho_{\ell}(p,u)-1\right)\]
and
\[ A(q,u)=\frac{q}{\varphi(q)}\prod_{p|
{q}{(q,u)^{-1}}}\left(\varrho_{\ell}(p,u)-1+\frac{1}{p}\right)\prod_{p|(u,q)}\left(\varrho_{\ell}(p,u)-2+\frac{1}{p}\right)\]
when $q$ is square-free. Also, for any $z\ge 1$, we always set
\[{\frak S}_{\ell}^{\prime}(u,z)=\sum_{q\le z}\frac{\mu(q)}{\varphi(q)}\lambda(q,u),~~{\frak S}_{\ell}(u,z)=\sum_{q\le z}\frac{\mu(q)}{\varphi(q)}A(q,u),\]
\[{\frak P}_{\ell}(u,z)=\prod_{p|u, p\le z}\frac{p-\varrho_{\ell}(p,u)}{p-1-\varrho_{\ell}(p,u)}
\prod_{p\le z}\left(1-\frac{\varrho_{\ell}(p,u)-1}{p-1}-\frac{\varrho_{\ell}(p,u)}{(p-1)^2}\right)\]
and
\[{\frak P}_{\ell}^{\prime}(u,z)=\prod_{p\le z}\left(1-\frac{\varrho_{\ell}(p,u)-1}{p-1}\right).\]

\paragraph{Acknowledgements.}
The author would like to thank the anonymous referees for their very helpful
comments and suggestions. The author also thank Professor Zhi-Guo Liu for his consistent encouragement.
\section{Preliminary lemma}
We shall need the following well-known results in analytic number theory.
\begin{lemma}\label{t21}
Let $1\le z\le y$, $v\in \zb\setminus\{0\}$ and integer $\ell\ge 2$.
Then we have
\[\sum_{y<n\le 2y}\left|{\frak S}_{\ell}^{\prime}(nv,z)-{\frak S}_{\ell}^{\prime}(nv)\right|^2\ll_{v,\ell} yz^{-1/(2000\ell^2)}.\]
\end{lemma}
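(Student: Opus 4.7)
The plan is to bound the tail by a second-moment expansion, exchanging summations, and exploiting the mean-zero cancellation of $\varrho_{\ell}(p,u)-1$ averaged over $u$ modulo $p$, valid whenever $p\nmid v$. Setting
\[
\Delta(n):=\mathfrak{S}_{\ell}^{\prime}(nv)-\mathfrak{S}_{\ell}^{\prime}(nv,z)=\sum_{q>z,\,\mu^{2}(q)=1}\frac{\mu(q)\lambda(q,nv)}{\varphi(q)},
\]
I would decompose the range $q>z$ into dyadic intervals $(Q,2Q]$ and, via a Cauchy--Schwarz argument with geometric weights in the dyadic parameter, reduce the statement to proving
\[
M_{Q}:=\sum_{y<n\le 2y}\Bigl|\sum_{\substack{Q<q\le 2Q\\\mu^{2}(q)=1}}\frac{\mu(q)\lambda(q,nv)}{\varphi(q)}\Bigr|^{2}\ll_{v,\ell}yQ^{-\eta}
\]
for every dyadic $Q\ge z$ and some explicit $\eta=\eta(\ell)>0$; summing a geometric series over $Q=2^{j}z$ will then produce the claimed exponent $1/(2000\ell^{2})$ after absorbing logarithmic losses.

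Next, I would expand the square and swap sums,
\[
M_{Q}=\sum_{Q<q_{1},q_{2}\le 2Q}\frac{\mu(q_{1})\mu(q_{2})}{\varphi(q_{1})\varphi(q_{2})}\sum_{y<n\le 2y}\lambda(q_{1},nv)\lambda(q_{2},nv),
\]
and analyze the inner sum using the periodicity of $n\mapsto\lambda(q_{1},nv)\lambda(q_{2},nv)$ modulo $P:=[q_{1},q_{2}]/\gcd([q_{1},q_{2}],v)$. When $P\le y$, the inner sum equals $(y/P)\sum_{a\bmod P}\lambda(q_{1},av)\lambda(q_{2},av)+O(\ell^{\,2\omega([q_{1},q_{2}])})$, and the complete-period sum factors via the Chinese remainder theorem into local contributions $\sigma_{2}(p):=\sum_{u\bmod p}(\varrho_{\ell}(p,u)-1)^{2}$ at primes $p\mid(q_{1},q_{2})$ and $\sigma_{1}(p):=\sum_{u\bmod p}(\varrho_{\ell}(p,u)-1)$ at primes dividing exactly one of $q_{1},q_{2}$.

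The crucial identity $\sum_{u\bmod p}\varrho_{\ell}(p,u)=p$ gives $\sigma_{1}(p)=0$ for every $p\nmid v$, so the complete-period sum vanishes for every pair $(q_{1},q_{2})$ with $q_{1}\neq q_{2}$ such that $[q_{1},q_{2}]/(q_{1},q_{2})$ is coprime to $v$; primes dividing $v$ contribute only a finite, $v$-dependent factor that is absorbed into the constant $\ll_{v,\ell}1$. The diagonal contribution $q_{1}=q_{2}=q$ evaluates, using $\sigma_{2}(p)\le(\ell-1)(p-1)$, to at most $y\sum_{Q<q\le 2Q}(\ell-1)^{\omega(q)}/\varphi(q)^{2}\ll_{\ell}yQ^{-1}(\log Q)^{\ell-1}$, comfortably inside the target $yQ^{-\eta}$. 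The boundary errors $O(\ell^{\,2\omega([q_{1},q_{2}])})$ summed against $1/(\varphi(q_{1})\varphi(q_{2}))$ with the trivial bound $\omega([q_{1},q_{2}])\le\omega(q_{1})+\omega(q_{2})$ contribute at most $Q^{\epsilon}(\log Q)^{O(\ell^{2})}$, again acceptable.

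The main obstacle will be the pairs $(q_{1},q_{2})$ for which the period $P$ exceeds $y$, since then the complete-period cancellation is not directly available. In this regime I would use the pointwise bound $|\lambda(q_{i},nv)|\le\ell^{\omega(q_{i})}$ together with the counting observation that $P>y$ forces $(q_{1},q_{2})<4Q^{2}/y$, restricting the common divisor and hence the number of admissible pairs, before an additional Cauchy--Schwarz step recovers a power saving in $Q$. It is the balancing of the $(\log Q)^{O(\ell^{2})}$ loss factors inherent in the sums of $\ell^{\,2\omega(\cdot)}$ against the modest counting savings available in this large-period regime that dictates the pessimistic exponent $1/(2000\ell^{2})$ appearing in the statement.
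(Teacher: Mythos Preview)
Your approach has a genuine gap in the large-$Q$ regime, and this gap is exactly the deep analytic input that the lemma is importing.

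The paper does not prove this lemma at all: it simply quotes Theorem~1 and Corollaries~1--2 of Kawada~\cite{MR1650745}, whose main tool is a zero-density estimate for the Dedekind zeta functions of the pure extension fields $\qb[\sqrt[\ell]{u}]$. In other words, the cancellation behind the bound $yz^{-1/(2000\ell^2)}$ is cancellation \emph{over primes} in $\sum_{p>z}(\varrho_{\ell}(p,u)-1)/p$, averaged in $u$, and this is controlled by the distribution of zeros of $\zeta_{\qb[\sqrt[\ell]{u}]}(s)$.

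Your elementary second-moment expansion, by contrast, exploits only cancellation \emph{over residues} $u\bmod p$ (the identity $\sigma_{1}(p)=0$). This is precisely what the paper itself does in Lemma~5.2, where the same technique shows that ${\frak S}_{\ell}'(u,x)$ is close to the truncated Euler product ${\frak P}_{\ell}'(u,x)$ in mean square. But to pass from ${\frak P}_{\ell}'(u,z)$ to the full ${\frak S}_{\ell}'(u)$ one must control primes $p>z$, and there your method breaks down. Concretely, your reduction requires $M_{Q}\ll yQ^{-\eta}$ for \emph{every} dyadic $Q\ge z$; but once $Q$ exceeds $y$ (indeed already once $Q\gg y^{1/2}$ for generic off-diagonal pairs), the period $P=[q_{1},q_{2}]/([q_{1},q_{2}],v)$ is larger than the outer range $y<n\le 2y$, and the complete-period vanishing is unavailable for \emph{any} pair, including the diagonal. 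The series $\sum_{q}\mu(q)\lambda(q,nv)/\varphi(q)$ is not absolutely convergent (its absolute series has Euler factor $1+(\ell-1)/(p-1)$), so the pointwise bound $|\lambda(q,nv)|\le(\ell-1)^{\omega(q)}$ yields only $M_{Q}\ll y(\log Q)^{O(\ell)}$, with no decay in $Q$ whatsoever. Your proposed ``counting observation'' that $P>y$ forces $(q_{1},q_{2})<4Q^{2}/y$ gives no restriction at all once $Q>y$, and no Cauchy--Schwarz rearrangement can manufacture a power saving from a divergent absolute series. The exponent $1/(2000\ell^{2})$ is not an artifact of balancing log losses in an elementary argument; it comes from the quality of Kawada's zero-density theorem.
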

\begin{proof}
This is due to Theorem 1, Corollary 1 and Corollary 2 of \cite{MR1650745}.
\end{proof}
\begin{lemma}\label{t22}Let $|u|\ge 1$, $\ell\in\zb_{\ge 1}$ and $x^{\ell}+u$ is irreducible over $\qb[x]$. Then we have
\[\left|\sum_{p}\frac{\varrho_{\ell}(p,u)-1}{p}\right|\le O_{\ell}\left(1\right)+4\ell\log\log (2\left|u\right|). \]
\end{lemma}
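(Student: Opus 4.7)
The plan is to convert the sum over rational primes into a sum over prime ideals of the number field $K:=\qb(\sqrt[\ell]{-u})$, which has degree $\ell$ by the irreducibility of $x^{\ell}+u$, and to exploit analytic properties of its Dedekind zeta function $\zeta_K$. The key algebraic input is the Kummer--Dedekind identification: for every rational prime $p\nmid\ell u$, $\varrho_{\ell}(p,u)$ equals the number of prime ideals $\mathfrak{p}\mid p$ of $\mathcal{O}_K$ with residue degree $1$. Moreover, $|d_K|\le|\mathrm{disc}(x^{\ell}+u)|=\ell^{\ell}|u|^{\ell-1}$, so $\log|d_K|\le\ell\log(\ell|u|)$, and in particular $\log\log|d_K|\le\log\log|u|+O_{\ell}(1)$.

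For the bad primes $p\mid\ell u$, the trivial bound $\varrho_{\ell}(p,u)\le\ell$ together with the classical estimate $\sum_{p\mid n}1/p\le\log\log n+O(1)$ shows that their total contribution to the sum is at most $\ell\log\log|u|+O_{\ell}(1)$. For the remaining primes, I would expand $\log(\zeta_K(s)/\zeta(s))$ via its Euler product in $\mathrm{Re}(s)>1$,
\[\log\frac{\zeta_K(s)}{\zeta(s)}=\sum_{p}\frac{\#\{\mathfrak{p}\mid p:\,f(\mathfrak{p})=1\}-1}{p^s}+g(s),\]
where $g$, collecting contributions from prime ideals of residue degree $\ge 2$ and higher prime-ideal powers, is holomorphic and $O(\ell)$ on $\mathrm{Re}(s)\ge 1/2+\varepsilon$. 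Letting $s\to 1^+$, the left-hand side converges to $\log\kappa_K$ with $\kappa_K=\mathrm{Res}_{s=1}\zeta_K(s)$ (the simple poles of $\zeta_K$ and $\zeta$ cancel), while the sum on the right converges by the regularity and non-vanishing of $\zeta_K/\zeta$ at $s=1$. Combined with the bad-prime bound and the Kummer--Dedekind identification for good primes, this yields
\[\sum_p\frac{\varrho_{\ell}(p,u)-1}{p}=\log\kappa_K+O(\ell\log\log|u|)+O_{\ell}(1).\]

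It remains to estimate $|\log\kappa_K|$. Landau's bound $\kappa_K\ll_{\ell}(\log|d_K|)^{\ell-1}$, combined with the discriminant estimate, gives the upper direction $\log\kappa_K\le(\ell-1)\log\log|u|+O_{\ell}(1)$; assembling the pieces furnishes the total estimate $(2\ell-1)\log\log|u|+O_{\ell}(1)\le 4\ell\log\log(2|u|)+O_{\ell}(1)$. The main obstacle is securing the matching lower bound on $\log\kappa_K$: unconditionally a Siegel zero could make $\kappa_K$ as small as $|d_K|^{-\varepsilon}$, which would destroy the claim. One must invoke a Stark-type bound $\kappa_K\gg_{\ell}(\log|d_K|)^{-1}$, valid away from at most one exceptional field in each dyadic discriminant range (handled separately), or equivalently exploit the structure of $K=\qb(\sqrt[\ell]{-u})$ through its Galois closure $K(\zeta_{\ell})$ and the Artin $L$-function factorisation of $\zeta_{K(\zeta_{\ell})}$. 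This is precisely the analytic input supplied by Kawada's Theorem~1 underlying Lemma~\ref{t21}, and it is the step where the $\log\log|u|$ quality of the bound is truly earned.
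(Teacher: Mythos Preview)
Your approach is genuinely different from the paper's and has a real gap at exactly the point you yourself flag. The paper never passes through the residue $\kappa_K$: instead it truncates at a height $y$, bounds $\sum_{p\le y}(\varrho_\ell(p,u)-1)/p$ trivially by $(\ell-1)\sum_{p\le y}1/p\le(\ell-1)\log\log y+O(1)$, and controls the tail $\sum_{p>y}$ by the Landau prime ideal theorem (in the form of \cite[Theorem~5.33]{MR2061214}) together with partial summation, obtaining a tail of size $O\big(\sqrt{D_u}\exp(-c_\ell\sqrt{\log y})\log y\big)$. Choosing $y=\exp((\log 2|u|)^4)$ makes the trivial part equal to $4(\ell-1)\log\log(2|u|)\le 4\ell\log\log(2|u|)$ and forces the tail to be $O_\ell(1)$; the constant $4\ell$ in the statement is precisely this.

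Your route, by contrast, requires $|\log\kappa_K|\ll\log\log|u|$, i.e.\ $\kappa_K\gg(\log|u|)^{-C}$ for a single field $K=\qb(\sqrt[\ell]{-u})$. Unconditionally one has only Siegel's ineffective $\kappa_K\gg_{\varepsilon}|d_K|^{-\varepsilon}$, which yields $|\log\kappa_K|\ll_{\varepsilon}\varepsilon\log|u|$, not $\log\log|u|$. Neither of your proposed fixes closes this: a Stark-type bound $\kappa_K\gg_\ell(\log|d_K|)^{-1}$ holds only away from a possible exceptional field in each discriminant range, whereas the lemma is asserted for \emph{every} admissible $u$; and Kawada's theorem (Lemma~\ref{t21} here) is a second-moment estimate over a range of $u$, not a pointwise lower bound on any individual $\kappa_K$. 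The paper's truncation sidesteps the residue entirely, using only the \emph{error term} in the prime ideal theorem to handle the tail, never the size of $\kappa_K$.
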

\begin{proof}Let $D_u$ be the discriminant of $\qb[\sqrt[\ell]{u}]$. It is easily seen that $|D_u|\le \ell^{\ell}\left|u\right|^{\ell-1}.$  Hence by Landau prime ideal theorem (see \cite[Theorem 5.33]{MR2061214}) and partial summation we have
\begin{align*}
\left|\sum_{p}\frac{\varrho_{\ell}(p,u)-1}{p}\right|&\le (\ell-1)\sum_{p\le y}\frac{1}{p}+\left|\sum_{p>y}\frac{\varrho_{\ell}(p,u)-1}{p}\right|\\
&\le (\ell-1)\log\log y+O_{\ell}(1)+O\left(\sqrt{D_u}\exp\left(-c_{\ell}\sqrt{\log y}\right)\log y\right),
\end{align*}
where $c_{\ell}$ is an absolute constant depending only on $\ell$. Setting $y=\exp((\log (2|u|))^{4})$ we obtain that
\[\left|\sum_{p}\frac{\varrho_{\ell}(p,u)-1}{p}\right|\le 4\ell\log\log (2|u|)+ O_{\ell}\left(1+D_u^{\frac{1}{2}}|2u|^{-c_{\ell}\log |2u|}(\log |2u|)^4\right).\]
Thus if $|u|\ge \exp(\ell/c_{\ell})$, then we obtain that
\[\left|\sum_{p}\frac{\varrho_{\ell}(p,u)-1}{p}\right|\le 4\ell\log\log (2|u|)+ O_{\ell}\left(1\right).\]
Thus we get the proof of the lemma.
\end{proof}
\begin{lemma}\label{t23}Let $\alpha=a/q+\beta$, $\left|\beta\right|\le X^{-1}L^B$, $q\le L^B$ and $B\ge 1$. Also let $z\in(XL^{-B}, X]$. We have
\[ R_{\ell}(\alpha,z):=J_{\ell}(\alpha,z)-I_{\ell}(\beta, z)B_{\ell}(q,a)/\varphi(q)
\ll z^{1/\ell}L^{-10B^2},\]
\[R_{\ell}'(\alpha,z):=I_{\ell}(\alpha,z)-I_{\ell}(\beta, z)B_{\ell}'(q,a)/q
\ll z^{1/\ell}L^{-10B^2},\]
where
\[ B_{\ell}(q,a)=\sum_{h\in\mathbb{Z}_q^*}e\left({ah^{\ell}}/{q}\right)\;\mbox{and}\; B_{\ell}'(q,a)=\sum_{h\in\mathbb{Z}_q}e\left({ah^{\ell}}/{q}\right).\]
\end{lemma}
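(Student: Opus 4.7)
The plan is to split $J_\ell(\alpha,z)$ according to the residue class of $m$ modulo $q$, which separates the rational and oscillatory parts of the phase. Using $m^\ell\equiv h^\ell\pmod{q}$ when $m\equiv h\pmod{q}$, one has
\[
J_\ell(\alpha,z)=\sum_{h=0}^{q-1}e\!\left(\frac{ah^{\ell}}{q}\right)\sum_{\substack{z^{1/\ell}<m\le 2z^{1/\ell}\\ m\equiv h\,(\mod q)}}\Lambda(m)\,e(m^{\ell}\beta).
\]
The terms with $(h,q)>1$ only receive contributions from $m$ that are prime powers of primes dividing $q$, totalling $O(\log q\cdot \log z)=O(L^{2})$ in absolute value, which is absorbed in the error. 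For each $h$ with $(h,q)=1$, I would invoke the Siegel--Walfisz theorem $\psi(t;q,h)=t/\varphi(q)+O(t\exp(-c\sqrt{\log t}))$, uniformly valid for $q\le L^{B}$, and couple it with Abel summation against the smooth factor $e(t^{\ell}\beta)$.

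Integration by parts in the resulting Stieltjes integral, together with the bound $|\beta|z\ll L^{B}$ on the total variation of the phase, yields
\[
\sum_{\substack{z^{1/\ell}<m\le 2z^{1/\ell}\\ m\equiv h\,(\mod q)}}\Lambda(m)\,e(m^{\ell}\beta)=\frac{1}{\varphi(q)}\int_{z^{1/\ell}}^{2z^{1/\ell}}e(t^{\ell}\beta)\,dt+O\!\left(z^{1/\ell}L^{B}\exp(-c\sqrt{L})\right)
\]
uniformly in $h$. Weighted summation over $h\in\mathbb{Z}_q^{*}$ then produces the main term $B_\ell(q,a)/\varphi(q)$ times the above integral, while the error is multiplied by $\varphi(q)\le L^{B}$. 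Since $L^{C}\exp(-c\sqrt{L})$ decays faster than any fixed negative power of $L$, this stage contributes $O(z^{1/\ell}L^{-M})$ for any fixed $M$, in particular $M=10B^{2}$.

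Finally, I would replace the integral by the Riemann sum $I_\ell(\beta,z)$ via Euler--Maclaurin: the comparison error is $O(1+|\beta|z)\ll L^{B}$, which after multiplication by $|B_\ell(q,a)|/\varphi(q)\le 1$ remains of order $L^{B}$, and is easily dominated by $z^{1/\ell}L^{-10B^{2}}$ because $z^{1/\ell}\gg X^{1/\ell}L^{-B/\ell}$ grows polynomially in $X$. The estimate for $R_\ell'(\alpha,z)$ follows along identical lines but is strictly simpler: no coprimality restriction is needed, Siegel--Walfisz is replaced by the elementary count $\#\{m\le t:m\equiv h\pmod{q}\}=t/q+O(1)$, and summation over all $h\in\mathbb{Z}_q$ produces the factor $B_\ell'(q,a)/q$. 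The main technical obstacle is the bookkeeping of the several factors of $L^{B}$ accumulated from the size of $q$, the radius of $\beta$, and the partial summation; these are only absorbed thanks to the super-polynomial savings from Siegel--Walfisz, which is therefore the decisive analytic input.
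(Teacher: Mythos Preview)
Your proposal is correct and follows essentially the same route as the paper: split $J_\ell(\alpha,z)$ by residue classes modulo $q$, dispose of the non-coprime classes by the trivial bound $O(\log q\,\log z)$, and for each coprime class apply partial summation with weight $e(m^\ell\beta)$ against the Siegel--Walfisz approximation $\psi(t;q,h)=t/\varphi(q)+O(t\exp(-c\sqrt{\log t}))$, picking up the factor $1+|\beta|z\ll L^B$ from the total variation of the phase. The only cosmetic difference is that the paper writes the main term directly as $\frac{1}{\varphi(q)}\sum_{z^{1/\ell}<m\le 2z^{1/\ell}}e(m^\ell\beta)$ rather than passing through the integral $\frac{1}{\varphi(q)}\int e(t^\ell\beta)\,dt$ and then back to the Riemann sum, but this is the same computation up to an $O(1+|\beta|z)$ term that you already account for.
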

\begin{proof} It is easily seen that
\begin{align*}
J_{\ell}(\alpha,z)&=\sum_{h\in\mathbb{Z}_q^*}e\left(\frac{ah^{\ell}}{q}\right)\sum_{\substack{z^{1/\ell}<m\le 2z^{1/\ell}\\m\equiv h\bmod q}}\Lambda(m)e(m^{\ell}\beta)+O(\log q\log z)\\
&=\frac{1}{\varphi(q)}\sum_{h\in\mathbb{Z}_q^*}e\left(\frac{ah^{\ell}}{q}\right)\sum_{z^{1/\ell}<m\le 2z^{1/\ell}}e\left(m^{\ell}\beta\right)+R_{\ell}(\alpha,z),
\end{align*}
by partial summation and where
\[
 R_{\ell}(\alpha,z)\ll (\log z)^2+\varphi(q)(1+|\beta|z)\max_{h\in\zb_q^*}\max_{z^{1/\ell}< x\le 2z^{1/\ell}}\left|\sum_{\substack{m\le x\\m\equiv h\bmod q}}\Lambda(m)-x/\varphi(q)\right|.
 \]
 Then by \cite[Corollary 5.29]{MR2061214}, we get the estimate of $R_{\ell}(\alpha,z)$. The estimate of $R_{\ell}'(\alpha,z)$ is similar and we omit its detail.
\end{proof}
\begin{lemma}\label{t24}
Let $\alpha=a/q+\lambda$ with $(a,q)=1$ and $|\lambda|\le q^{-2}$. Then for each integer $\ell\ge 2$ and any $A>0$
there exists a $B_{\frak{m},\ell}(A)>0$ such that for $B\ge B_{\frak{m},\ell}(A)$ the estimate
\[
\int_{y}^{2y}{\rm d} t\left|\sum_{t<m^{\ell}\le t+H}\Lambda(m)e(m^{\ell}a/q)\right|^2\ll_{\ell,A} H^2y^{\frac{2}{\ell}-1}L^{-A-2}
\]
holds for $L^B<q\le HL^{-B}, y^{1-{1}/({2\ell})}L^B<H\le y$ and $y\ge \sqrt{X}$.
\end{lemma}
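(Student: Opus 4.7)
The plan is to apply Gallagher's $L^2$-inequality to reduce the mean square of the short-interval sum to the mean square of the long exponential sum $J_\ell$ integrated over a short arc of length $2/H$ centred at $a/q$, and then to establish a minor-arc $L^2$ bound for $J_\ell$ on that arc. This strategy follows the approach of Perelli and Zaccagnini \cite{MR1328560} for short-interval sums over primes, extended to $\ell$-th powers of primes as in Bauer \cite{MR1762244}.

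Setting $a_n=\Lambda(n^{1/\ell})$ if $n$ is an $\ell$-th power of an integer and $a_n=0$ otherwise, the inner sum coincides with $\sum_{t<n\le t+H}a_n e(na/q)$. Since $|a_n|\ll L$ and $\#\{n\le 2^\ell y+H:a_n\ne 0\}\ll y^{1/\ell}$, Gallagher's lemma yields, up to a negligible lower-order error,
\[
\int_y^{2y}\Bigl|\sum_{t<m^\ell\le t+H}\Lambda(m)e(m^\ell a/q)\Bigr|^2 dt \;\ll\; H^2\int_{-1/H}^{1/H}|J_\ell(a/q+\beta,y)|^2\,d\beta.
\]
It thus suffices to prove
\[
\int_{-1/H}^{1/H}|J_\ell(a/q+\beta,y)|^2\,d\beta \;\ll_{\ell,A}\; y^{2/\ell-1}L^{-A-2}
\]
for $L^B<q\le HL^{-B}$ and $B=B_{\mathfrak{m},\ell}(A)$ sufficiently large.

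Opening the square and integrating in $\beta$ splits the latter integral into a diagonal and an off-diagonal term. The diagonal $m_1=m_2$ contributes $\ll H^{-1}y^{1/\ell}L$, which is acceptable as soon as $B\ge A+3$ by virtue of $H\ge y^{1-1/(2\ell)}L^B$. The off-diagonal becomes, after grouping by $k=m_1^\ell-m_2^\ell$, a weighted exponential sum
\[
\sum_{0<|k|\le 2^\ell y}c_k\,K_H(k)\,e(ka/q),\qquad c_k=\!\!\sum_{\substack{m_1^\ell-m_2^\ell=k\\ y<m_i^\ell\le 2^\ell y}}\!\!\Lambda(m_1)\Lambda(m_2),
\]
with $K_H(k)\ll\min(1/H,1/|k|)$, which one estimates via Vaughan's identity combined with Weyl's inequality for $e(m^\ell\alpha)$ on the minor arc $q>L^B$.

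The principal obstacle is precisely this off-diagonal estimate. The kernel $K_H(k)$ forces one to handle $|k|$ up to $\asymp y$, while the Weyl-type saving on $e(ka/q)$ for $q\in(L^B,HL^{-B}]$ must be uniform throughout, including the delicate upper end $q\asymp HL^{-B}$ where the saving becomes weakest. Overcoming this requires a careful interplay between the length of summation, the quality of the Weyl bound, and the structure of the pair-correlation coefficients $c_k$, as developed in \cite{MR1328560} and \cite{MR1762244}. Once this is in place, the required saving of $L^{-A-2}$ follows for $B=B_{\mathfrak{m},\ell}(A)$ sufficiently large, completing the proof of the lemma.
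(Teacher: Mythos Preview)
The paper does not prove this lemma at all; it simply quotes it as Lemma~3.3 of Bauer~\cite{MR1762244}. Your sketch, by contrast, attempts an argument, and the very first reduction is applied in the wrong direction. Gallagher's lemma (Lemma~1 of~\cite{MR0279049}) bounds a short-arc $L^2$-integral of an exponential sum by the $L^2$-mean of the associated short-interval sum, namely
\[
\int_{-1/H}^{1/H}\bigl|J_\ell(a/q+\beta,y)\bigr|^2\,d\beta \;\ll\; H^{-2}\int_{\mathbb{R}}\Bigl|\sum_{t<m^\ell\le t+H/2}\Lambda(m)\,e(m^\ell a/q)\Bigr|^2\,dt,
\]
and not the reverse inequality you write. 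Indeed, this is exactly the direction in which the paper itself invokes Gallagher in Section~4 (the minor-arc treatment), with the present lemma then supplying the bound for the right-hand side. Your proposed reduction would therefore be both formally invalid and, in the context of the paper, circular.

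The substantive content of your off-diagonal discussion---Vaughan's identity together with Weyl-type savings on the minor arc $L^B<q\le HL^{-B}$, with the delicate uniformity near the upper endpoint---is precisely what Bauer carries out in~\cite{MR1762244}, but applied \emph{directly} to the mean square
\[
\int_{y}^{2y}\Bigl|\sum_{t<m^\ell\le t+H}\Lambda(m)\,e(m^\ell a/q)\Bigr|^2\,dt,
\]
after opening the square and integrating in $t$ (which already produces the Fej\'er-type weight $H-|m_1^\ell-m_2^\ell|$), without any passage through $J_\ell$ via Gallagher. So the correct route is either to cite Bauer, as the paper does, or to expand the left-hand side directly and then perform the Vaughan/Weyl analysis on the resulting bilinear and type~I/II sums; the Gallagher detour is neither needed nor available in this direction.
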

\begin{proof}
This is quoted from \cite[Lemma 3.3]{MR1762244}.
\end{proof}
\begin{lemma}\label{t25}
Let $a,q$ be positive integers with $(a,q)=1$. Then for each integer $\ell\ge 2$,
there exists a $B_{\frak{m},\ell}'(A)>0$ such that for $B\ge B_{\frak{m},\ell}'(A)$ the estimate
\[
W_{\ell}(y,H):=\int_{y}^{2y}{\rm d} t\left|\sum_{t<m^{\ell}\le t+H}e(m^{\ell}a/q)\right|^2\ll_{\ell,A} H^2y^{\frac{2}{\ell}-1}L^{-A-2}
\]
holds for $L^B<q\le HL^{-B}$, $y^{1-{1}/{\ell}}L^B<H\le y$ and $y\ge \sqrt{X}$.
\end{lemma}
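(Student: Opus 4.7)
The plan is to adapt the strategy of Lemma \ref{t24} from Bauer \cite{MR1762244}, which treats the analogous mean-square estimate with a von Mangoldt weight. The absence of $\Lambda$ here means a direct opening of the square combined with Weyl's inequality suffices, and the wider range $H>y^{1-1/\ell}L^{B}$ becomes accessible.

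First I would expand the square and interchange integration with the double sum to obtain
\[
W_{\ell}(y,H)=\sum_{m_1,m_2}\omega(m_1,m_2)\,e\!\left(\tfrac{a}{q}(m_1^{\ell}-m_2^{\ell})\right),
\]
where $\omega(m_1,m_2)$ is the Lebesgue measure of $\{t\in[y,2y]:t<m_{i}^{\ell}\le t+H,\ i=1,2\}$, supported where $y<m_{i}^{\ell}\le 2y+H$ and bounded by $(H-|m_1^\ell-m_2^\ell|)_{+}$. The diagonal contribution $m_1=m_2$ is $O(Hy^{1/\ell})$, which already lies within the target bound $H^2 y^{2/\ell-1}L^{-A-2}$ whenever $H\ge y^{1-1/\ell}L^{A+2}$, a condition covered by the hypothesis with $B\ge A+2$.

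For the off-diagonal terms I would substitute $m_1=m+n$, $n\ge 1$ (using symmetry), and apply the binomial expansion
\[
P_n(m):=(m+n)^\ell-m^\ell=\ell n m^{\ell-1}+\binom{\ell}{2}n^2 m^{\ell-2}+\cdots+n^\ell,
\]
which exhibits $P_n$ as a polynomial in $m$ of degree $\ell-1$ with leading coefficient $\ell n$. The constraint $P_n(m)\le H$ together with $m\asymp y^{1/\ell}$ restricts $n$ to the range $n\le M\asymp Hy^{1/\ell-1}$. Since $\omega(m+n,m)$ is non-increasing in $m$ for fixed $n$, Abel summation disposes of the weight, and the off-diagonal contribution is controlled by $H\sum_{1\le n\le M}|T_n|$, where
\[
T_n:=\sum_{m\in I_n}e\!\left(\tfrac{a}{q}P_{n}(m)\right)
\]
with $I_n\subset(y^{1/\ell},(2y)^{1/\ell}]$ a subinterval.

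The final step is to bound $T_n$ by Weyl's inequality for polynomials of degree $\ell-1$; the leading coefficient $\ell n a/q$ reduces to a fraction of denominator $q^{*}=q/\gcd(\ell n,q)$, and a dyadic decomposition of the $n$-sum according to $d=\gcd(n,q)$ then delivers the target bound, provided $B$ is taken sufficiently large in terms of $A$ and $\ell$. When $\ell=2$, $T_n$ reduces to a geometric progression bounded by $\min(y^{1/2},\|2na/q\|^{-1})$, and the classical Vinogradov estimate $\sum_{n\le M}\min(N,\|n\alpha\|^{-1})\ll (M/q+1)(N+q\log q)$ with $\alpha=2a/q$ and $N=y^{1/2}$ directly yields the conclusion after multiplication by $H$. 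The main obstacle for $\ell\ge 3$ will be the uniform execution of the $\ell-2$ stages of Weyl differencing, together with the dyadic accounting needed to control those $n$ sharing large common factors with $q$; the value $B=B_{\frak{m},\ell}'(A)$ is dictated by the accumulated $L^{O_{\ell}(1)}$ losses through these steps.
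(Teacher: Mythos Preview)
Your reduction to the exponential sums $T_n$ is correct and coincides with the paper's first step; the paper likewise opens the square, substitutes $m_1=n+k$, $m_2=n$, and arrives at
\[
W_\ell(y,H)\ll H\sum_{k\le Hy^{1/\ell-1}}\max_{x}\Bigl|\sum_{n\le x}e\!\left(P_\ell(n,k)\tfrac{a}{q}\right)\Bigr|.
\]
Your treatment of the diagonal and of the case $\ell=2$ is also fine.

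The gap is in the step ``bound $T_n$ by Weyl's inequality and sum over $n$'' for $\ell\ge 3$. Weyl's inequality for a polynomial of degree $\ell-1$ with leading coefficient $\ell n a/q$ gives
\[
|T_n|\ll N^{1+\varepsilon}\Bigl(\tfrac{1}{q^*}+\tfrac{1}{N}+\tfrac{q^*}{N^{\ell-1}}\Bigr)^{2^{2-\ell}},\qquad N\asymp y^{1/\ell},\quad q^*=q/\gcd(\ell n,q).
\]
For a generic $n$ one has $q^*\asymp q$, and the bound is worthless once $q>N^{\ell-1}=y^{1-1/\ell}$. But the hypothesis only forces $q\le HL^{-B}$, and $H$ may be as large as $y$; thus $q$ can be of size $yL^{-B}\gg y^{1-1/\ell}$. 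In that regime your dyadic decomposition by $d=\gcd(n,q)$ does not help: the vast majority of $n\le M$ have small $d$, so you are left with the trivial bound $|T_n|\le N$ for almost all $n$, and $\sum_n|T_n|$ gives no saving at all.

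What the paper does instead is invoke the lemma of Perelli--Zaccagnini \cite{MR1328560}, which amounts to performing the $\ell-2$ Weyl differencings \emph{before} summing over $k$ and then treating the product $r=\ell!\,k\,h_1\cdots h_{\ell-2}$ as a single variable ranging up to $MN^{\ell-2}=H/N$. The resulting Vinogradov-type sum produces the factor $(q/H)^{2^{2-\ell}}$ in place of $(q/N^{\ell-1})^{2^{2-\ell}}$, and it is precisely this gain of $M=Hy^{1/\ell-1}$ in the denominator that covers the full range $q\le HL^{-B}$. If you want to avoid citing \cite{MR1328560}, you can recover the same bound by applying H\"older in $n$ to $\bigl(\sum_n|T_n|\bigr)^{2^{\ell-2}}$, opening $|T_n|^{2^{\ell-2}}$ via repeated differencing, and only then estimating $\sum_{n,h_1,\dots,h_{\ell-2}}\min(N,\|\ell!nh_1\cdots h_{\ell-2}a/q\|^{-1})$; but that is the Perelli--Zaccagnini argument, not a black-box application of Weyl to each $T_n$.
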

\begin{proof} It is easily seen that
\begin{align*}
W_{\ell}(y,H)&=\ssum_{\substack{y<m_1^{\ell},m_2^{\ell}\le 2y+H\\ |m_1^{\ell}-m_2^{\ell}|\le H}}e\left((m_1^{\ell}-m_{2}^{\ell})a/q\right)\int\limits_{\max(m_1^{\ell}-H,m_2^{\ell}-H)}^{\min(m_1^{\ell},m_2^{\ell})}{\rm d}x\\
&=\ssum_{\substack{y<m_1^{\ell},m_2^{\ell}\le 2y+H\\ |m_1^{\ell}-m_2^{\ell}|\le H}}e\left((m_1^{\ell}-m_{2}^{\ell})a/q\right)\left(H-|m_1^{\ell}-m_2^{\ell}|\right)\\
&=\sum_{k}\sum_{n}\left(H-|P_{\ell}(n,k)|\right){\bf 1}_{\left(\substack{y<(n+k)^{\ell},n^{\ell}\le 2y+H\\ |(n+k)^{\ell}-n^{\ell}|\le H}\right)}e\left(P_{\ell}(n,k)a/q\right)\\
&\ll \sum_{k\le Hy^{1/\ell-1}}H\max_{y^{1/\ell}<x\le 2y^{1/\ell}}\left|\sum_{y^{1/\ell}\le n\le x}e\left(P_{\ell}(n,k)a/q\right)\right|\\
&\ll \sum_{k\le Hy^{1/\ell-1}}H\max_{y^{1/\ell}/2<x\le 2y^{1/\ell}}\left|\sum_{n\le x}e\left(P_{\ell}(n,k)a/q\right)\right|,
\end{align*}
where $P_{\ell}(n,k)=(n+k)^{\ell}-n^{\ell}$, then by \cite[Lemma]{MR1328560}, we have
\[W_{\ell}(y,H)\ll_{\ell,D} H^2y^{\frac{2}{\ell}-1}\left(\left(\frac{L^{D}}{q}\right)^{2^{2-\ell}}+\left(\frac{L^D}{y^{1/\ell}}\right)^{2^{2-\ell}}+\left(\frac{qL^D}{H}\right)^{2^{2-\ell}}+\left(\frac{L^{\ell^2}}{L^{D}}\right)^{2^{2-\ell}}\right) \]
holds for any $D>1$. By setting $D=B/2$ and $B_{{\frak m},\ell}'(A)=2^{\ell-1}(A+2)+2\ell^2$, we obtain the proof of the lemma.
\end{proof}
\section{The proof of the main results}
We first denote
\[S_{\ell}(y,X)= \sum_{1 \le u \le y}\left| \sum_{m^{\ell} \le X}\Lambda(m^{\ell} + u)\Lambda(m) - {\frak S}_{\ell} (u)\sum_{m^{\ell} \le X}1\right|^2\]
and
\[S_{\ell}^{\prime}(y,X)= \sum_{1 \le u \le y}\left| \sum_{m^{\ell} \le X}\Lambda(m^{\ell} + u) - {\frak S}_{\ell}^{\prime} (u)\sum_{m^{\ell} \le X}1\right|^2.\]
Then, by sum over dyadic intervals process one has
\begin{align}\label{eq31}
S_{\ell}(y,X)\ll& \sum_{1 \le u \le y}\left| \sum_{m^{\ell} \le XL^{-B}}\Lambda(m^{\ell} + u)\Lambda(m) - {\frak S}_{\ell} (u)\sum_{m^{\ell} \le XL^{-B}}1\right|^2 \nonumber\\
&+\sum_{1 \le u \le y}\left| \sum_{XL^{-B}<m^{\ell} \le X}\Lambda(m^{\ell} + u)\Lambda(m) - {\frak S}_{\ell} (u)\sum_{XL^{-B}<m^{\ell} \le X}1\right|^2 \nonumber\\
\ll& {yX^{2/\ell}}{L^{-B}}+BL\sup_{X/L^{B}\le z\le X/2^{\ell}}F_{\ell}(y,z),
\end{align}
where $B\ge 2$ and
\[F_{\ell}(y,z)=\sum_{u\le y}\left| \sum_{z<m^{\ell} \le 2^{\ell}z}\left(\Lambda(m^{\ell} + u)\Lambda(m) - {\frak S}_{\ell} (u)\right)\right|^2.\]
Similarly, we have
\begin{equation}\label{eq32}
S_{\ell}^{\prime}(y,X)\ll {yX^{2/\ell}}{L^{-B}}+BL\sup_{X/L^{B}\le z\le X/2^{\ell}}F_{\ell}^{\prime}(y,z)
\end{equation}
for any $B\ge 2$, where
\[F_{\ell}^{\prime}(y,z)=\sum_{u\le y}\left| \sum_{z<m^{\ell} \le 2^{\ell}z}\left(\Lambda(m^{\ell} + u) - {\frak S}_{\ell}^{\prime} (u)\right)\right|^2.\]
\newline
We define the major arcs as
\begin{equation}\label{eq33}
J_{q,a}=\left({a}/{q}-{L^{B^2}}/{X},{a}/{q}+{L^{B^2}}/{X}\right],
\end{equation}
where $1\le a\le q$. It is obvious that the interval $J_{q,a}$ are pairwise disjoint. Setting
\begin{equation}\label{eq34}
{\frak M}=\bigcup_{q\le L^{B^2}} \sideset{}{^{*}}\bigcup_{1\le a\le q} J_{q,a}\quad \mbox{and}\quad{\frak m}=\left(L^{B^2}/{X},1+L^{B^2}/{X}\right]\setminus{\frak M},
\end{equation}
where $*$ means that $(a,q)=1$. Application of the circle method gives
\[
\sum_{z<m^{\ell}\le 2^{\ell}z}\Lambda(m^{\ell}+u)\Lambda(m)=\left\{\int_{{\frak M}}+
\int_{{\frak m}}\right\}J(\alpha,z)J_{\ell}(\alpha,z)e\left(u\alpha\right){\rm d}\alpha.
\]
Therefore,
\begin{align}\label{eq35}
F_{\ell}(y,z)=&\sum_{u\le y}\left|\sum_{z<m^{\ell}\le 2^{\ell}z}\left(\Lambda(m^{\ell}+u)\Lambda(m)-{\frak S}_{\ell}(u)\right)\right|^2\nonumber\\
\ll& \sum_{u\le y}\left|\int_{{\frak M}}J(\alpha,z)J_{\ell}(\alpha,z)e\left(u\alpha\right){\rm d}\alpha-\sum_{z<m^{\ell}\le 2^{\ell}z}{\frak S}_{\ell}(u)\right|^2\nonumber\\
&+\sum_{u\le y}\left|\int_{{\frak m}}J(\alpha,z)J_{\ell}(\alpha,z)e\left(u\alpha\right){\rm d}\alpha\right|^2\\
=:&S_{{\frak M}}(y,z)+S_{{\frak m}}(y,z).
\end{align}
Similarly, we have
\begin{equation}\label{eq36}
F_{\ell}'(y,z)\ll S_{{\frak M}}'(y,z)+S_{{\frak m}}'(y,z),
\end{equation}
where
\[S_{{\frak M}}'(y,z)=\sum_{u\le y}\left|\int_{{\frak M}}J(\alpha,z)I_{\ell}(\alpha,z)e\left(u\alpha\right){\rm d}\alpha-\sum_{z<m^{\ell}\le 2^{\ell}z}{\frak S}_{\ell}'(u)\right|^2\]
and
\[S_{{\frak m}}'(y,z)=\sum_{u\le y}\left|\int_{{\frak m}}J(\alpha,z)I_{\ell}(\alpha,z)e\left(u\alpha\right){\rm d}\alpha\right|^2.\]

We shall prove the following lemmas, from which, \eqref{eq31}, \eqref{eq32}, \eqref{eq35} and \eqref{eq36} the results of our two theorems  follow.
\begin{lemma}\label{t31} For any $A>0$,there exists a $B_{\ell,1}(A)>0$ such that for $B\ge B_{\ell,1}(A)$, then
\[
S_{\frak m}(y,z)\ll yz^{{2}/{\ell}}L^{-A}
\]
holds for all $z^{1-{1}/{(2\ell)}}L^B\le y\le z$ with $XL^{-B}\le z\le X/2^{\ell}$.

For any $A>0$,there exists a $B_{\ell,1}'(A)>0$ such that for $B\ge B_{\ell,1}'(A)$, then
\[
S_{\frak m}'(y,z)\ll yz^{{2}/{\ell}}L^{-A}
\]
holds for all $z^{1-{1}/{\ell}}L^B\le y\le z$ with $X/L^{B}\le z\le X/2^{\ell}$..
\end{lemma}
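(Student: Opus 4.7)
The strategy is to pass from $S_{\mathfrak m}(y,z)$ to an $L^2$-integral over $\mathfrak m$ by Bessel's inequality, and then control the resulting integral by combining Vinogradov's minor-arc bound for $J$, Weyl's inequality for $J_\ell$, and the short-interval mean-value estimate of Lemma \ref{t24} (applied via Gallagher's inequality).

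Viewing $c_u := \int_{\mathfrak m} J(\alpha) J_\ell(\alpha) e(u\alpha)\, d\alpha$ as the $(-u)$-th Fourier coefficient of $J J_\ell \cdot \mathbf{1}_{\mathfrak m}$, Parseval's identity immediately yields
\[
S_{\mathfrak m}(y,z) = \sum_{u \le y} |c_u|^2 \;\le\; \int_{\mathfrak m} |J(\alpha)|^2 |J_\ell(\alpha)|^2 \, d\alpha.
\]
Because the right-hand side is independent of $y$, it suffices to show, at the worst endpoint $y = z^{1-1/(2\ell)} L^B$, that $\int_{\mathfrak m} |J J_\ell|^2 \, d\alpha \ll z^{1+3/(2\ell)} L^{B-A}$.

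To handle this integral I would perform a Farey dissection of $\mathfrak m$: by Dirichlet's theorem each $\alpha \in \mathfrak m$ admits $\alpha = a/q + \beta$ with $(a,q) = 1$, $L^{B^2} < q \le Q$, $|\beta| \le 1/(qQ)$, for a suitably chosen $Q$; then dyadically decompose $\mathfrak m$ by the size of $q$. On the arcs where $q$ is large enough that Weyl's inequality (combined with Vaughan's identity) gives the pointwise saving $|J_\ell(\alpha)|^2 \ll z^{3/(2\ell)} L^{-A-1}$, couple this with Parseval's identity $\int_0^1 |J|^2 \, d\alpha \ll z \log z$ to bound the contribution. On the remaining small-$q$ arcs, invoke Vinogradov's prime exponential sum bound $|J(\alpha)| \ll z L^{-C}$, which holds uniformly on $\mathfrak m$ once $B$ is taken large in terms of $C$, and apply Gallagher's inequality on each Farey arc around $a/q$ to relate
\[
\int_{|\beta| \le 1/(2H)} |J_\ell(a/q+\beta)|^2 \, d\beta \;\ll\; H^{-2} \int_{0}^{2^\ell z} \Bigl| \sum_{t < m^\ell \le t+2H} \Lambda(m) e(m^\ell a/q) \Bigr|^2 \, dt,
\]
which by Lemma \ref{t24} (applied dyadically in $t$) is $\ll z^{2/\ell-1} L^{-A'}$, provided $H$ satisfies the compatibility conditions of the lemma. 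Summing over Farey arcs in each dyadic block of $q$ and multiplying by the Vinogradov bound on $|J|^2$ then closes this regime.

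The estimate for $S_{\mathfrak m}'(y,z)$ follows by the same method with $J_\ell$ replaced by $I_\ell$ and Lemma \ref{t24} replaced by Lemma \ref{t25}; the hypothesis $H > y^{1-1/\ell} L^B$ in Lemma \ref{t25} (rather than $H > y^{1-1/(2\ell)} L^B$ in Lemma \ref{t24}) is precisely what forces the weaker lower bound $y \ge z^{1-1/\ell} L^{B'}$. The principal obstacle throughout is a delicate parameter-balancing exercise: the compatibility conditions $L^B < q \le H L^{-B}$ and $z^{1-1/(2\ell)} L^B < H \le z$ of Lemma \ref{t24} must hold uniformly across all dyadic sub-ranges of $q$, while the Weyl regime must cover the complementary range. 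Tracking the polylogarithmic factors carefully and choosing $B$ sufficiently large in terms of $A$ and $\ell$ constitutes the bulk of the technical work.
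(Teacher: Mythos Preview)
Your opening move---Bessel's inequality
\[
S_{\mathfrak m}(y,z)\;\le\;\int_{\mathfrak m}|J(\alpha,z)J_\ell(\alpha,z)|^2\,d\alpha
\]
---is exactly what the paper uses, but only in the easy range $y\in(zL^{-B},z]$, where the logarithmic saving $\sup_{\mathfrak m}|J_\ell|^2\ll z^{2/\ell}L^{-2B}$ suffices. For the hard range $y\le zL^{-B}$ your approach has a genuine gap: once you pass to the integral you have thrown away the dependence on $y$, and you then need the power saving
\[
\int_{\mathfrak m}|J J_\ell|^2\,d\alpha\;\ll\;z^{1+3/(2\ell)}L^{B-A}.
\]
None of the ingredients you list can produce this. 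On the portion of $\mathfrak m$ where $q$ is only slightly larger than $L^{B^2}$, Weyl's inequality for $J_\ell$ and Vinogradov's bound for $J$ both give merely logarithmic savings, and your proposed use of Gallagher plus Lemma~\ref{t24} on a Farey arc of length $1/(qQ)$ forces $H\asymp qQ$; after summing over all $\sim Q^2$ arcs and multiplying by $\sup|J|^2$ you recover at best $z^{1+2/\ell}$ with a log saving, not the required $z^{1/(2\ell)}$ power saving. In short, the full minor-arc $L^2$-integral is genuinely of size $z^{1+2/\ell}$ up to logs, so Bessel is too crude here.

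The paper avoids this by \emph{not} applying Bessel when $y$ is small. Instead it expands the square, keeps the geometric sum $\sum_{u\le y}e(u(\alpha-\beta))\ll\min(y,\|\alpha-\beta\|^{-1})$, and splits $[0,1]$ into $H=\lfloor y\rfloor+1$ intervals $H_i$ of length $1/H$. The $\min$ bound localises the double integral to near-diagonal pairs $(i,j)$, and after Cauchy one is reduced to
\[
\max_i\int_{\mathfrak m\cap H_i}|J_\ell(\beta,z)|^2\,d\beta.
\]
Gallagher's lemma now applies with $H\approx y$, and Lemma~\ref{t24} is invoked with exactly this $H$; its hypothesis $H>z^{1-1/(2\ell)}L^B$ is precisely the stated lower bound on $y$. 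This is the mechanism that ties the range of $y$ to the short-interval lemma---a link your Bessel step severs.
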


\begin{lemma}\label{t32} Let $z\in[X/L^{B}, X/2^{\ell}]$ and $y\in (z^{\delta}, z]$ with $\delta\in(0,1)$ be fixed. We have
\[
S_{\frak M}(y,z)\ll_{\delta,\ell,A} yz^{\frac{2}{\ell}}L^{-A} \;\mbox{and}\; S_{\frak M}^{\prime}(y,z)\ll_{\delta,\ell,A} yz^{\frac{2}{\ell}}L^{-A}
\]
for any $A>0$, $B=\max(2000\ell^2(12\ell+A), 2^{\ell}(10\ell+A)+c(\ell))$ with $c(\ell)$ an absolute constant depending only on $\ell$.
\end{lemma}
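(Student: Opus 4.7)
The plan is to perform a standard major-arc analysis in parallel for $S_{\frak M}$ and $S_{\frak M}'$. On a single arc $J_{q,a}$ with $(a,q) = 1$ and $\alpha = a/q + \beta$, Lemma \ref{t23} supplies
\[J_\ell(\alpha,z) = \frac{B_\ell(q,a)}{\varphi(q)}\,I_\ell(\beta,z) + R_\ell(\alpha,z), \quad R_\ell \ll z^{1/\ell} L^{-10B^2},\]
together with the unweighted analog for $I_\ell(\alpha,z)$. A parallel Siegel--Walfisz-type argument (in the spirit of the proof of Lemma \ref{t23}) yields $J(\alpha,z) = (\mu(q)/\varphi(q))\, I(\beta,z) + O(z L^{-10B^2})$, since the Ramanujan sum $c_q(-a)$ collapses to $\mu(q)$ when $(a,q)=1$. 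Expanding $J(\alpha,z)J_\ell(\alpha,z)$ and $J(\alpha,z)I_\ell(\alpha,z)$ as a main term plus three cross-error products, I then integrate each piece separately over ${\frak M}$.

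Summing the main-term contribution over reduced residues $a$ modulo $q$ and then over $q \le L^{B^2}$ collapses the Gauss-type sums through the identity
\[\sum_{a\in\zb_q^*} e(ua/q)\,B_\ell(q,a) = \sum_{h\in\zb_q^*}\sum_{a\in\zb_q^*} e(a(h^\ell+u)/q) = \varphi(q)\,A(q,u),\]
and analogously $\sum_{a\in\zb_q^*} e(ua/q)\,B_\ell'(q,a) = q\lambda(q,u)$ in the primed setting. This produces the truncated singular series ${\frak S}_\ell(u, L^{B^2})$ (respectively ${\frak S}_\ell'(u, L^{B^2})$) multiplied by the singular integral $T(u) = \int_{-L^{B^2}/X}^{L^{B^2}/X} I(\beta,z)\,I_\ell(\beta,z)\,e(u\beta)\,d\beta$. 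Extending the $\beta$-range to $[-1/2, 1/2]$ using $|I_\ell(\beta,z)| \ll z^{1/\ell}$ and $|I(\beta,z)| \ll \min(z, |\beta|^{-1})$ to control the tail identifies $T(u)$ with the representation count $\#\{m : z < m^\ell \le 2^\ell z,\ m^\ell + u \le 2^\ell z\}$, which matches the nominal target $\sum_{z<m^\ell\le 2^\ell z} 1$ up to an edge correction of size $O(u z^{1/\ell - 1})$; this correction is absorbed within the budget $yz^{2/\ell}L^{-A}$.

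The critical step is the mean-square upgrade of the truncated singular series to the completed one. In the primed case, Lemma \ref{t21} with $v = 1$ and cutoff $L^{B^2}$, applied after a dyadic decomposition of $u \le y$, gives
\[\sum_{u \le y} |{\frak S}_\ell'(u, L^{B^2}) - {\frak S}_\ell'(u)|^2 \ll y\, L^{-B^2/(2000\ell^2)},\]
which under the stated choice of $B$ is comfortably smaller than $yL^{-A}$; multiplied by the $(z^{1/\ell})^2 = z^{2/\ell}$ coming from $|T(u)|^2$, this sits well inside $yz^{2/\ell}L^{-A}$. For the unprimed singular series I propose to transfer this rate through a prime-by-prime Euler-product comparison: writing both $A(q,u)/\varphi(q)$ and $\lambda(q,u)/\varphi(q)$ as multiplicative functions of squarefree $q$, their local factors agree up to perturbations of size $O(1/p^2)$ on primes $p\nmid u$, while on $p \mid u$ the extra contribution is uniformly controlled using Lemma \ref{t22} to bound $\sum_p (\varrho_\ell(p,u)-1)/p$ by $O(\log\log(2|u|))$.

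Finally, the three cross-error terms arising from Lemma \ref{t23} and the Siegel--Walfisz approximation are handled by combining Plancherel in $\alpha$ with Cauchy--Schwarz in $u$: one factor is bounded trivially ($\ll z$ or $\ll z^{1/\ell}$) while the other is estimated in $L^2$ over ${\frak M}$ of total measure $\ll L^{3B^2}/X$, and the $L^{-10B^2}$ saving from $R_\ell$ and its $J$-analog swamps these contributions well below $yz^{2/\ell}L^{-A}$. The principal obstacle I anticipate is the transfer of Lemma \ref{t21} from the primed to the unprimed singular series with uniformity in $u\in[1,y]$, since the cited reference \cite{MR1650745} covers only ${\frak S}_\ell'$; the multiplicative local comparison together with the $\log\log$ bound from Lemma \ref{t22} should close this gap, but tracking uniformity in $u$ through the Euler-product bookkeeping is where the delicate work lies.
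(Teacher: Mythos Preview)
Your architecture mirrors the paper's: split the major-arc integrand via Lemma~\ref{t23} into a main term plus error terms, collapse the arithmetic sums to obtain the truncated singular series times a singular integral $T(u)$, and then upgrade the truncated series to the completed one in mean square. Two steps, however, do not go through as written.

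\textbf{The singular-integral tail.} You propose to extend $T(u)$ to $[-\tfrac12,\tfrac12]$ using only $|I_\ell(\beta,z)|\ll z^{1/\ell}$ and $|I(\beta,z)|\ll|\beta|^{-1}$. That gives
\[
\int_{L^{B^2}/X}^{1/2}|I(\beta,z)\,I_\ell(\beta,z)|\,d\beta \;\ll\; z^{1/\ell}\int_{L^{B^2}/X}^{1/2}\frac{d\beta}{\beta}\;\ll\; z^{1/\ell}L,
\]
which is a \emph{loss} of a logarithm, not a saving; after squaring and summing over $u\le y$ this term alone is $\gg yz^{2/\ell}L^{2}$ times a crude bound on $|{\frak S}_\ell(u,L^{B^2})|^2$, so it cannot be pushed below $yz^{2/\ell}L^{-A}$. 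The paper handles this tail with Hua's inequality $\int_0^1|I_\ell|^{2^\ell}\ll z^{(2^\ell-\ell)/\ell}L^{c(\ell)}$ combined with H\"older against $|I|^{2^\ell/(2^\ell-1)}$, obtaining $z^{1/\ell}L^{-(B-c(\ell))/2^\ell}$. This is precisely where the constant $c(\ell)$ in the statement of the lemma enters; without a mean-value input of this strength the tail bound fails.

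\textbf{The transfer to the unprimed singular series.} Your local-factor comparison between $A(p,u)/\varphi(p)$ and $\lambda(p,u)/\varphi(p)$ is correct at the level of Euler products, but ${\frak S}_\ell(u,L^{B^2})$ is a truncated Dirichlet \emph{sum}, not a truncated Euler \emph{product}. The paper inserts an intermediate object ${\frak P}_\ell(u,x)=\prod_{p\le x}(\cdots)$ and proves (Lemma~\ref{t52}) that $\sum_{u\le y}|{\frak P}_\ell(u,x)-{\frak S}_\ell(u,x)|^2\ll yx^{-1}(\log x)^{\ell^2}+x^{4\log x}$ via a Rankin-trick tail bound together with an explicit second-moment expansion of the middle range. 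Only after this does the factorisation ${\frak P}_\ell={\frak P}_\ell'\cdot f_\ell$ (with $f_\ell(u,x)\ll\log(|u|+2)$) reduce matters to Lemma~\ref{t21}. You flag this as ``where the delicate work lies'', but the missing sum-to-product step is the actual content; a prime-by-prime comparison alone does not bridge ${\frak S}_\ell(u,L^{B^2})$ and ${\frak S}_\ell'(u,L^{B^2})$.
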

\section{The minor arcs}
In this section, we shall prove Lemma \ref{t31}. Firstly, we have
\[S_{\frak m}(y,z)\ll \int_{{\frak m}}\left|J(\alpha,z)J_{\ell}(\alpha,z)\right|^2{\rm d}\alpha\ll z\sup_{\alpha\in {\frak m}}|J_{\ell}(\alpha,z)|^2 \]
by Bessel's inequity. Then the classical result
\[J_{\ell}(\alpha,z)\ll z^{{1}/{\ell}}(\log z)^{-B}\]
holds for all $\alpha\in {\frak m}$ and any $B\ge 0$. This implies that if $y\in(zL^{-B},z]$ then
\begin{equation}\label{eq41}
S_{\frak m}(y,z)\ll z^{1+{2}/{\ell}}L^{-2B}\ll yz^{{2}/{\ell}}L^{-B}.
\end{equation}
If $y\in (z^{1-{1}/{(2\ell)}}L^B, zL^{-B}]$, then
\begin{align*}
S_{\frak m}(y,z)&=\sum_{u\le y}\left|\int_{{\frak m}}J(\alpha,z)J_{\ell}(\alpha,z)e\left(u\alpha\right){\rm d}\alpha\right|^2\\
&=\int_{{\frak m}}{\rm d}\beta J(\beta,z)J_{\ell}(\beta,z)\int_{{\frak m}}J(-\alpha,z)J_{\ell}(-\alpha,z)\sum_{u\le y}e\left(u(\alpha-\beta)\right){\rm d}\alpha\\
&\ll \int_{{\frak m}}{\rm d}\beta \left|J(\beta,z)J_{\ell}(\beta,z)\right|\int_{{\frak m}}\left|J(\alpha,z)J_{\ell}(\alpha,z)\right|\min\left(y,\frac{1}{\parallel\alpha-\beta\parallel}\right){\rm d}\alpha.
\end{align*}
Splitting the unit interval in $H=\lfloor y\rfloor+1$ adjacent, disjoint intervals $H_i$ of length $H^{-1}$, we obtain that
\[
S_{\frak m}(y,z)\ll\ssum_{1\le i,j\le H}\frac{y}{1+\left|i-j\right|} \iint_{\substack{{\frak m}\cap H_i\\ {\frak m}\cap H_j}}{\rm d}\beta{\rm d}\alpha \left|J(\beta,z)J_{\ell}(\beta,z)J(\alpha,z)J_{\ell}(\alpha,z)\right|.
\]
By cauchy's inequity, we have
\begin{align*}
S_{\frak m}(y,z)&\ll y\sum_{1\le i\le H}\left( \int_{{\frak m}\cap H_i}{\rm d}\beta \left|J(\beta,z)J_{\ell}(\beta,z)\right|\right)^2\sum_{1\le j\le H}\frac{1}{1+\left|i-j\right|}\\
&\ll y\log y\sum_{1\le i\le H}\left( \int_{{\frak m}\cap H_i}{\rm d}\beta \left|J(\beta,z)\right|^2\int_{{\frak m}\cap H_i}{\rm d}\beta \left|J_{\ell}(\beta,z)\right|^2\right)\\
&\ll yL \int_{{\frak m}}\left|J(\alpha,z)\right|^2{\rm d}\alpha \max_{1\le i\le H}\int_{{\frak m}\cap H_i}{\rm d}\beta \left|J_{\ell}(\beta,z)\right|^2.
\end{align*}
For $\beta=a/q+\lambda\in {\frak m}\cap H_i (1\le i\le H)$, there
exist $q$, $a$ and $\lambda$ satisfying $\beta=a/q+\lambda$, $L^B\le q\le HL^{-B}$, $|\lambda|\le L^{-B}$ and $(q,a)=1$. Applying Gallagher's lemma (see \cite[Lemma 1]{MR0279049}) we have
\begin{align*}
\int_{{\frak m}\cap H_i}{\rm d}\beta \left|J_{\ell}(\beta,z)\right|^2&\ll \int_{\left|\lambda\right|\le \frac{1}{H}}{\rm d}\lambda \left|\sum_{z<m^{\ell}\le 2^{\ell}z}\Lambda(m)e\left(\frac{a}{q}m^{\ell}\right)e\left(m^{\ell}\lambda\right)\right|^2\\
&\ll \frac{1}{H^{2}}\int_{\mathbb{R}}{\rm d}x\left|\sum_{x\le m^{\ell}\le x+H/2}{\rm 1}_{z<m^{\ell}\le 2^{\ell}z}\Lambda(m)e\left(\frac{a}{q}m^{\ell}\right)\right|^2\\
&=\frac{1}{H^{2}}\int_{z-H/2}^{2^{\ell}z}{\rm d}x\left|\sum_{x\le m^{\ell}\le x+H/2}{\rm 1}_{z<m^{\ell}\le 2^{\ell}z}\Lambda(m)e\left(\frac{a}{q}m^{\ell}\right)\right|^2.
\end{align*}
Namely,
\begin{align*}
&\int_{{\frak m}\cap H_i}{\rm d}\beta \left|J_{\ell}(\beta,z)\right|^2\\
&\qquad\ll\frac{1}{H^{2}}\left(\int_{z}^{2^{\ell}z-H/2}{\rm d}x\left|\sum_{x\le m^{\ell}\le x+H/2}\Lambda(m)e\left(\frac{a}{q}m^{\ell}\right)\right|^2+H(Hz^{\frac{1}{\ell}-1})^2\right)\\
&\qquad\ll \frac{1}{H^{2}}\sum_{j=0}^{\ell-1}\int_{2^jz}^{2^{j+1}z}{\rm d}x\left|\sum_{x\le m^{\ell}\le x+H/2}\Lambda(m)e\left(\frac{a}{q}m^{\ell}\right)\right|^2+Hz^{\frac{2}{\ell}-2}.
\end{align*}
Then by Lemma \ref{t24} and notice that $y\le zL^{-B}$, one has
\[S_{\frak m}(y,z)\ll yL(z^{\frac{2}{\ell}-1}L^{-A-2}+z^{\frac{2}{\ell}-1}L^{-B})\int_{0}^{1}\left|J(\alpha,z)\right|^2{\rm d}\alpha\ll yz^{\frac{2}{\ell}}L^{-A}\]
holds for any $A>0$ if $B\ge \max(B_{{\frak m},\ell}(A),A)+2$. Combining \eqref{eq41} and above, we get
\[S_{\frak m}(y,z)\ll yz^{\frac{2}{\ell}}L^{-A}\]
holds for any $B\ge \max(B_{{\frak m},\ell}(A),A)+2$. Finally, using Lemma \ref{t25} in place of Lemma \ref{t24}, it is not difficult to obtain the proof of the estimate of $S_{\frak m}'(y,z)$.
\section{ The major arcs}
In this section we consider the estimates for $S_{{\frak M}}(y,z)$ and $S_{{\frak M}}'(y,z)$. For $S_{{\frak M}}(y,z)$, notice that the definition of $B_{\ell}(q,a)$ (see Lemma \ref{t23}), the fact
\[B_{1}(q,-a)=\sum_{h\in\zb_q^*}e\left(-\frac{ah}{q}\right)=\mu(q) \;\mbox{if}\;  (q,a)=1,\]
(\ref{eq33}) and (\ref{eq34}) implies that
\begin{align*}
\rint_{{\frak M}}&J(\alpha,z)J_{\ell}(\alpha,z)e\left(u\alpha\right){\rm d}\alpha-{\frak S}_{\ell}(u)\sum_{z<m^{\ell}\le 2^{\ell}z}1\\
&\quad=\sum_{q\le L^{B^2}}\sum_{a\in\mathbb{Z}_q^*}e\left(\frac{au}{q}\right)\rint_{-{L^{B^2}}/{X}}^{{L^{B^2}}/{X}}{\rm d}\lambda J(\alpha,z)\left(J_{\ell}(\alpha,z)-\frac{B_{\ell}(q,a)}{\varphi(q)}I_{\ell}(\lambda,z)\right)e\left(u\lambda\right)\\
&\quad\quad+\sum_{q\le L^{B^2}}\sum_{a\in\mathbb{Z}_q^*}e\left(\frac{au}{q}\right)\frac{B_{\ell}(q,a)}{\varphi(q)}
\rint_{-{L^{B^2}}/{X}}^{{L^{B^2}}/{X}}{\rm d}\lambda I_{\ell}(\lambda,z)\left(J(\alpha,z)-\frac{\mu(q)}{\varphi(q)}I(\lambda,z)\right)e\left(u\lambda\right)\\
&\quad\quad+\sum_{q\le L^{B^2}}\sum_{a\in\mathbb{Z}_q^*}e\left(\frac{au}{q}\right)\frac{\mu(q)B_{\ell}(q,a)}{\varphi(q)^2}\rint_{-{L^{B^2}}/{X}}^{{L^{B^2}}/{X}}{\rm d}\lambda I(\lambda,z)I_{\ell}(\lambda,z)e\left(u\lambda\right)-{\frak S}_{\ell}(u)\sum_{z<m^{\ell}\le 2^{\ell}z}1.
\end{align*}
Namely,
\begin{align*}
\rint_{{\frak M}}&J(\alpha,z)J_{\ell}(\alpha,z)e\left(u\alpha\right){\rm d}\alpha-{\frak S}_{\ell}(u)\sum_{z<m^{\ell}\le 2^{\ell}z}1\\
&\quad=\rint_{{\frak M}}{\rm d}\alpha J(\alpha,z)R_{\ell}(\alpha,z)e\left(u\alpha\right)+\rint_{{\frak M}}{\rm d}\alpha R(\alpha)J_{\ell}(\alpha,z)e\left(u\alpha\right)\\
&\quad\quad +\left({\frak S}_{\ell}(u, L^{B^2})\rint_{-{L^{B^2}}/{X}}^{{L^{B^2}}/{X}}{\rm d}\lambda I(\lambda,z)I_{\ell}(\lambda,z)e\left(u\lambda\right)
-{\frak S}_{\ell}(u)\sum_{z<m^{\ell}\le 2^{\ell}z}1\right)\\
&\quad=:\mathcal{I}_1(u,z)+\mathcal{I}_2(u,z)+\mathcal{I}_3(u,z),
\end{align*}
where
\[R(\alpha)=\sum_{m\le 2^{\ell}z}\Lambda(m)e\left(-m\alpha\right)-\frac{\mu(q)}{\varphi(q)}\sum_{m\le 2^{\ell}z}e\left(-m\lambda\right)\]
with $\alpha=a/q+\lambda$. Therefore, we have
\begin{align*}
S_{\frak M}(y,z)&=\sum_{u\le y}\left|\mathcal{I}_1(u,z)+\mathcal{I}_2(u,z)+\mathcal{I}_3(u,z)\right|^2\\
&\ll \sum_{u\le y}\left(\left|\mathcal{I}_1(u,z)\right|^2+\left|\mathcal{I}_2(u,z)\right|^2+\left|\mathcal{I}_3(u,z)\right|^2\right)
\end{align*}
by Cauchy's inequality. Similarly, we obtain that
\[
S_{\frak M}^{\prime}(y,z)\ll \sum_{u\le y}\left(\left|\mathcal{I}_1'(u,z)\right|^2+\left|\mathcal{I}_2'(u,z)\right|^2+\left|\mathcal{I}_3'(u,z)\right|^2\right),
\]
where
\[\mathcal{I}_1'(u,z)=\int_{{\frak M}}{\rm d}\alpha J(\alpha,z)R_{\ell}'(\alpha,z)e\left(u\alpha\right)\;,\; \mathcal{I}_2'(u,z)=\int_{{\frak M}}{\rm d}\alpha R(\alpha)I_{\ell}(\alpha,z)e\left(u\alpha\right)\]
and
\[\mathcal{I}_3'(u,z)={\frak S}_{\ell}'(u, L^{B^2})\int_{-\frac{L^{B^2}}{X}}^{\frac{L^{B^2}}{X}}{\rm d}\lambda I(\lambda,z)I_{\ell}(\lambda,z)e\left(u\lambda\right)
-{\frak S}_{\ell}'(u)\sum_{z<m^{\ell}\le 2^{\ell}z}1.\]

We have firstly
\begin{align*}
\sum_{u\le y}\left|\mathcal{I}_1(u,z)\right|^2&=\sum_{u\le y}\left|\sum_{q\le L^{B^2}}\sum_{a\in\mathbb{Z}_q^*}e\left(\frac{au}{q}\right)
\int_{-\frac{L^{B^2}}{X}}^{\frac{L^{B^2}}{X}}{\rm d}\lambda J(\lambda,z)R_{\ell}(\alpha,z)e\left(u\lambda\right)\right|^2\\
&\le \sum_{u\le y}\left|\sum_{q\le L^{B^2}}\varphi(q)X\sup_{\alpha\in{\frak M}}\left|R_{\ell}(\alpha,z)\right|X^{-1}L^{B^2}\right|^2\ll_B yz^{\frac{2}{\ell}}L^{-B^2}
\end{align*}
by Lemma \ref{t23}. Also, from lemma \ref{t23} we obtain that
\begin{align*}
\sum_{u\le y}\left|\mathcal{I}_2(u,z)\right|^2&=\sum_{u\le y}\left|\sum_{q\le L^{B^2}}\sum_{a\in\mathbb{Z}_q^*}e\left(\frac{au}{q}\right)
\int_{-\frac{L^{B^2}}{X}}^{\frac{L^{B^2}}{X}}{\rm d}\lambda J_{\ell}(\lambda,z)R(\alpha)e\left(u\lambda\right)\right|^2\\
&\ll \sum_{u\le y}\left|\sum_{q\le L^{B^2}}z^{\frac{1}{\ell}}\sum_{a\in\zb_q^*}\frac{\left|B(q,a)\right|}{\varphi(q)}\sup_{\alpha\in{\frak M}}\left|\sum_{j=0}^{\lfloor(\log (2X))/{\log 2}\rfloor+1}R_1(-\alpha,2^j)\right|\frac{L^{B^2}}{X}\right|^2\\
&\ll yz^{\frac{2}{\ell}}\left|\sum_{q\le L^{B^2}}L^{3B^2}\sup_{\alpha\in{\frak M}}\max_{z\le 2X}\left|R_1(-\alpha,z)\right|X^{-1}\right|^2  \ll yz^{\frac{2}{\ell}}L^{-B^2}.
\end{align*}
Note that $I(\lambda,z)\ll\left|\lambda\right|^{-1}$, Hua's inequity (see \cite[Theorem 4]{MR0194404})
\[\int_{0}^1\left|I_{\ell}(\lambda,z)\right|^{2^{\ell}}{\rm d}\lambda\ll_{\ell} z^{\frac{2^{\ell}-\ell}{\ell}} L^{c(\ell)},\]
where $c(\ell)$ is an absolute constant depending only on $\ell$. Then the using of H\"{o}lder's inequality gives
\begin{align*}
&\int_{\frac{L^{B^2}}{X}<\left|\lambda\right|\le \frac{1}{2}}{\rm d}\lambda \left|I(\lambda,z)I_{\ell}(\lambda,z)\right|\\
&\qquad\qquad\ll \left(\int_{\frac{L^{B^2}}{X}<\left|\lambda\right|\le \frac{1}{2}}{\rm d}\lambda \left|I(\lambda,z)\right|^{\frac{2^{\ell}}{2^{\ell}-1}}\right)^{\frac{2^{\ell}-1}{2^{\ell}}}\left(\int_{0}^1{\rm d}\lambda \left|I_{\ell}(\lambda,z)\right|^{2^{\ell}}\right)^{\frac{1}{2^{\ell}}}\\
&\qquad\qquad\ll_{\ell}\left({L^{B^2}}/{X}\right)^{(1-\frac{2^{\ell}}{2^{\ell}-1})\frac{2^{\ell}-1}{2^{\ell}}}\left(z^{\frac{2^{\ell}}{\ell}-1}L^{c(\ell)}\right)^{\frac{1}{2^{\ell}}}\ll z^{\frac{1}{\ell}}L^{-\frac{B-c(\ell)}{2^{\ell}}}.
\end{align*}
On the other hand,
\[
\int_{-\frac{1}{2}}^{\frac{1}{2}}{\rm d}\lambda I(\lambda,z)I_{\ell}(\lambda,z)e\left(u\lambda\right)=z^{\frac{1}{\ell}}+O(1).
\]
Setting $B\ge \max(2,c(\ell))$, we obtain that
\begin{align*}
\sum_{u\le y}\left|\mathcal{I}_3(u,z)\right|^2&=\sum_{u\le y}\left|{\frak S}_{\ell}(u,L^B)\int_{\left|\lambda\right|\le \frac{L^B}{X}}{\rm d}\alpha I(\alpha,z)I_{\ell}(\alpha,z)e(u\alpha)-{\frak S}_{\ell}(u)\sum_{z<m^{\ell}\le 2^{\ell}z}1\right|^2\\
&\ll z^{\frac{2}{\ell}}\sum_{u\le y}\left|{\frak S}_{\ell}(u,L^B)-{\frak S}_{\ell}(u)\right|^2+z^{\frac{2}{\ell}}L^{-\frac{B-c(\ell)}{2^{\ell}}}\sum_{u\le y}\left|{\frak S}_{\ell}(u)\right|^2.
\end{align*}
We can conclude from the above estimates that
\begin{equation}\label{eq51}
S_{\frak M}(y,z)\ll yz^{\frac{2}{\ell}}L^{-B}+z^{\frac{2}{\ell}}\sum_{u\le y}\left|{\frak S}_{\ell}(u,L^B)-{\frak S}_{\ell}(u)\right|^2+z^{\frac{2}{\ell}}L^{-\frac{B-c(\ell)}{2^{\ell}}}\sum_{u\le y}\left|{\frak S}_{\ell}(u)\right|^2
\end{equation}
for $B\ge \max(2,c(\ell))$.

We now prove the following crude estimates for ${\frak S}_{\ell}(u)$ and ${\frak S}_{\ell}^{\prime}(u)$.
\begin{lemma}\label{t51}For all integer $|u|\in (0, X]$, we have
\[{\frak S}_{\ell}(u)\ll_{\ell} L^{5\ell}\;\mbox{and}\; {\frak S}_{\ell}'(u)\ll_{\ell} L^{5\ell}.\]
\end{lemma}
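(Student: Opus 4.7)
The plan is to estimate both Euler products by taking logarithms and reducing the key sum to Lemma \ref{t22}. First I would verify the basic bound $\varrho_\ell(p,u) \le \min(\ell, p-1)$: if $p \le \ell$ and $\varrho_\ell(p,u) = p$, setting $x = 0$ and $x = 1$ would force both $u \equiv 0$ and $u \equiv -1 \pmod p$, a contradiction. Also, if $p \mid u$ then $\varrho_\ell(p,u) = 1$. So every Euler factor of $\mathfrak{S}_\ell(u)$ and $\mathfrak{S}_\ell'(u)$ is positive and bounded away from $0$ and $\infty$ in a $p$-uniform way.

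Next I would split the product at $p = 2\ell$. The contribution of primes $p \le 2\ell$ is $O_\ell(1)$ since there are $O_\ell(1)$ such factors each of size $O_\ell(1)$. Similarly, the factor $\prod_{p|u}(p-\varrho_\ell(p,u))/(p-1-\varrho_\ell(p,u))$ in $\mathfrak{S}_\ell(u)$ is a product of terms of the form $(p-1)/(p-2)$ (since $\varrho_\ell(p,u) = 1$ for $p|u$) over at most $O(\log X/\log\log X)$ primes, so it contributes at most $\exp(O_\ell(\log\log X))$. For $p > 2\ell$ the ratio $(\varrho_\ell(p,u)-1)/(p-1)$ is bounded by $1/2$, so I can Taylor-expand:
\[
\log\!\left(1-\frac{\varrho_\ell(p,u)-1}{p-1}-\frac{\varrho_\ell(p,u)}{(p-1)^2}\right) = -\frac{\varrho_\ell(p,u)-1}{p} + O_\ell(p^{-2}),
\]
and the analogous identity for $\mathfrak{S}_\ell'(u)$. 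The error terms $O_\ell(p^{-2})$ sum absolutely to $O_\ell(1)$, reducing everything to bounding $|\sum_{p}(\varrho_\ell(p,u)-1)/p|$.

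When $x^\ell + u$ is irreducible over $\mathbb{Q}[x]$, Lemma \ref{t22} gives immediately
\[
\Big|\sum_p \frac{\varrho_\ell(p,u)-1}{p}\Big| \le 4\ell\log\log(2|u|) + O_\ell(1) \le 4\ell\log L + O_\ell(1),
\]
so $\mathfrak{S}_\ell(u),\mathfrak{S}_\ell'(u) \ll_\ell L^{4\ell}$, which is stronger than the claimed $L^{5\ell}$, with $L^\ell$ of slack. If $x^\ell+u$ is reducible, Capelli's theorem gives a factorisation $x^\ell + u = \prod_i g_i(x)$ with each $g_i \in \mathbb{Z}[x]$ irreducible of degree $\ell_i$ and $\sum_i \ell_i = \ell$. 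For all but finitely many $p$ one has $\varrho_\ell(p,u) = \sum_i r_i(p)$, where $r_i(p)$ counts roots of $g_i$ mod $p$. Applying the Landau prime ideal theorem in $\mathbb{Q}[\alpha_i]$ (as in the proof of Lemma \ref{t22}) to each $g_i$ yields $|\sum_p(r_i(p)-1)/p| \le 4\ell_i\log\log(2|u|) + O_\ell(1)$, and summing over $i$ produces the same shape of bound with $4\ell$, plus an additional $(k-1)\log\log y$ term from the $k$ constant ``$-1$'' contributions, which is still $\le (\ell-1)\log L + O_\ell(1)$. Combined, this gives the required estimate, and the $L^{5\ell}$ bound absorbs the loss. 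The main obstacle will be packaging the reducible case cleanly so that the ramified primes and the resultant primes do not cause trouble; once the factorisation is in place, the estimates are parallel to the irreducible case.
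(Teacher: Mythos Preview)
Your core argument---take logarithms, split off primes $p\le 2\ell$, Taylor-expand the Euler factors to reduce to $\sum_p(\varrho_\ell(p,u)-1)/p$, and invoke Lemma~\ref{t22}---is exactly the paper's proof. The paper simply applies Lemma~\ref{t22} and does not separately discuss the reducible case, so your treatment is more thorough on that point.

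That said, your reducible-case argument is muddled as written. When $x^\ell+u=\prod_i g_i$ has $k\ge 2$ irreducible factors, one has $\varrho_\ell(p,u)-1=\sum_i(r_i(p)-1)+(k-1)$ for all but finitely many $p$, so
\[
\sum_{p}\frac{\varrho_\ell(p,u)-1}{p}=\sum_i\sum_p\frac{r_i(p)-1}{p}+(k-1)\sum_p\frac{1}{p},
\]
and the last sum diverges; your ``$(k-1)\log\log y$'' is not a legitimate bound for it. The clean fix is to observe that for the \emph{upper} bound on $\mathfrak{S}_\ell(u),\mathfrak{S}_\ell'(u)$ you only need an upper bound on $\sum_p(1-\varrho_\ell(p,u))/p$, and the divergent term enters with a \emph{negative} sign, so it only helps: the Euler products actually converge to $0$ when $x^\ell+u$ is reducible, and the lemma is trivial there. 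Alternatively, just note that the paper's inequality $\exp\bigl(\sum_{p>2\ell}(1-\varrho_\ell(p,u))/p\bigr)$ remains valid (as $0$) in the reducible case without further work.
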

\begin{proof}We just prove the estimate for ${\frak S}_{\ell}(u)$, the proof for ${\frak S}_{\ell}'(u)$ is similar. Note that $0\le \varrho_{\ell}(p,u)\le \ell$, we have
\begin{align*}
{\frak S}_{\ell}(u) &=\prod_{p|u}\frac{p-\varrho_{\ell}(p,u)}{p-1-\varrho_{\ell}(p,u)}\prod_{p}\left(1-\frac{\varrho_{\ell}(p,u)-1}{p-1}-\frac{\varrho_{\ell}(p,u)}{(p-1)^2}\right)\\
&\ll_{\ell} \prod_{p|u,p>2\ell}\left(1+\frac{1}{p-1-\ell}\right)\prod_{p>2\ell}\left(1-\frac{\varrho_{\ell}(p,u)-1}{p-1}\right)\\
&\ll_{\ell}\exp\left(\sum_{p|u,p>2\ell}\frac{1}{p}\right)\exp\left(\sum_{p>2\ell}\frac{1-\varrho_{\ell}(p,u)}{p}\right).
\end{align*}
Then by Lemma \ref{t22}, we trivially have
\[{\frak S}_{\ell}(u)\ll_{\ell}\exp\left(\log\log |u|+4\ell\log\log (2|u|)+O_{\ell}(1)\right)\ll_{\ell} \left(\log (2|u|)\right)^{5\ell}\ll L^{5\ell}\]
holds for all $|u|\in[1, X]\cap\zb$.  Which complete the proof of the lemma.
\end{proof}
Lemma \ref{t21}, Lemma \ref{t51}, \eqref{eq51} and the crude estimates ${\frak S}_{\ell}(u,x)\ll x$ for $x>0$ implies that
\begin{equation*}
S_{\frak M}(y,z)\ll yz^{\frac{2}{\ell}}L^{-B}+z^{\frac{2}{\ell}}L^{4B+10\ell}+z^{\frac{2}{\ell}}\sum_{L^{2B}<u\le y}\left|{\frak S}_{\ell}(u,L^B)-{\frak S}_{\ell}(u)\right|^2+yz^{\frac{2}{\ell}}L^{-\frac{B-c(\ell)}{2^{\ell}}+10\ell}.
\end{equation*}
Notice that $y\ge z^{\delta}\ge(X/L^B)^{\delta}\gg _{B,\delta} X^{\delta/2}$, we have
\begin{equation}\label{eq52}
S_{\frak M}(y,z)\ll_{\ell,\delta,B} yz^{\frac{2}{\ell}}L^{-\frac{B-c(\ell)}{2^{\ell}}+10\ell}+z^{\frac{2}{\ell}}\sum_{L^{2B}<u\le y}\left|{\frak S}_{\ell}(u,L^B)-{\frak S}_{\ell}(u)\right|^2.
\end{equation}
Similarly, we have
\begin{equation*}
S_{\frak M}^{\prime}(y,z)\ll_{\ell,\delta,B} yz^{\frac{2}{\ell}}L^{-\frac{B-c(\ell)}{2^{\ell}}+10\ell}+z^{\frac{2}{\ell}}\sum_{L^{2B}<u\le y}\left|{\frak S}_{\ell}^{\prime}(u,L^B)-{\frak S}_{\ell}^{\prime}(u)\right|^2.
\end{equation*}
From Lemma \ref{t21} we obtain the estimate for $S_{\frak M}^{\prime}(y,z)$ immediately, say
\begin{align*}
S_{\frak M}^{\prime}(y,z)\ll_{\ell,\delta,B} yz^{\frac{2}{\ell}}L^{-\frac{B-c(\ell)}{2^{\ell}}+10\ell}+z^{\frac{2}{\ell}}L y(L^B)^{-\frac{1}{2000\ell^2}}\ll_{\ell,\delta,A}  yz^{\frac{2}{\ell}}L^{-A}
\end{align*}
by setting $B=\max(2000\ell^2(1+A), 2^{\ell}(10\ell+A)+c(\ell))$. For get the estimate for $S_{\frak M}(y,z)$, we need the following lemma.
\begin{lemma}\label{t52}
Let positive real numbers $x$ and $y$ be sufficiently large. We have
\[\sum_{u\le y}\left|{\frak P}_{\ell}(u,x)-{\frak S}_{\ell}(u,x)\right|^2\ll_{\ell} yx^{-1}\log^{\ell^2} x+x^{4\log x}\]
and
\[\sum_{u\le y}\left|{\frak P}_{\ell}^{\prime}(u,x)-{\frak S}_{\ell}^{\prime}(u,x)\right|^2\ll_{\ell} yx^{-1}\log^{\ell^2} x+x^{4\log x}.\]
\end{lemma}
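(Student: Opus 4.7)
The plan is to treat both statements by the same strategy: rewrite each difference as a truncated multiplicative sum over $x$-smooth squarefree integers $q>x$, and exploit the orthogonality $\sum_{u\bmod p}(\varrho_\ell(p,u)-1)=0$ (which follows from $\sum_{u\bmod p}\varrho_\ell(p,u)=p$), together with its analogue $\sum_{u\bmod p}A(p,u)=0$. Since $\lambda(q,u)/\varphi(q)$ and $A(q,u)/\varphi(q)$ are multiplicative in squarefree $q$, each partial sum ${\frak S}_\ell^{(\prime)}(u,x)$ is a M\"obius-weighted truncation of an Euler product whose completion agrees with ${\frak P}_\ell^{(\prime)}(u,x)$ extended to all primes. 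This gives the key identity
\[
{\frak P}_\ell^{\prime}(u,x)-{\frak S}_\ell^{\prime}(u,x)=\sum_{\substack{q>x,\ q\ \text{sqfree}\\ p\mid q\Rightarrow p\le x}}\frac{\mu(q)\lambda(q,u)}{\varphi(q)},
\]
with the unprimed version obtained by replacing $\lambda$ by $A$.

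Next, I would introduce a cutoff $M=x^{2\log x}$ and split the outer range into $(x,M]$ and $(M,\infty)$. For the tail $q>M$, the trivial bound $|\lambda(q,u)|/\varphi(q)\le\prod_{p\mid q}(\ell-1)/(p-1)$ combined with Rankin's trick at shift $\sigma=1/\log x$ yields
\[
\sum_{\substack{q>M,\ x\text{-smooth sqfree}}}\frac{|\lambda(q,u)|}{\varphi(q)}\le M^{-\sigma}\prod_{p\le x}\Bigl(1+\frac{p^\sigma(\ell-1)}{p-1}\Bigr)\ll x^{-2}(\log x)^{e(\ell-1)},
\]
uniformly in $u$; squaring and summing over $u\le y$ contributes $O(yx^{-1})$, which is absorbed into the main term.

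For the bulk range, I would expand the square and apply the Chinese Remainder Theorem. Writing $d=(q_1,q_2)$, $m_i=q_i/d$, and $Q=\mathrm{lcm}(q_1,q_2)=dm_1m_2$, the product
\[
\lambda(q_1,u)\lambda(q_2,u)=\prod_{p\mid d}(\varrho_\ell(p,u)-1)^2\prod_{p\mid m_1m_2}(\varrho_\ell(p,u)-1)
\]
is $Q$-periodic in $u$ and its complete-period sum factorizes by CRT. By the orthogonality above, every prime $p\mid m_1m_2$ kills its local factor, so the complete-period sum vanishes unless $q_1=q_2$. Decomposing $\sum_{u\le y}=\lfloor y/Q\rfloor\sum_{u\bmod Q}+O(Q\max|\cdot|)$ and using the local identity $\sum_{u\bmod p}(\varrho_\ell(p,u)-1)^2=(p-1)(\gcd(\ell,p-1)-1)$, the diagonal contributes
\[
\sum_{\substack{x<q\le M\\ x\text{-smooth sqfree}}}\frac{y}{q\,\varphi(q)}\prod_{p\mid q}(\gcd(\ell,p-1)-1)\ll yx^{-1}(\log x)^{\ell^2},
\]
via standard bounds on $\sum_q(\ell-1)^{\omega(q)}/q^2$, while the off-diagonal boundary errors summed over all pairs $q_1,q_2\le M$ give at most $O(M^2(\log M)^{O(\ell)})=O(x^{4\log x})$, matching the stated error.

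The main obstacle will be the bookkeeping for the unprimed case: the local factor of $A(p,u)$ differs according to whether $p\mid u$ or $p\nmid u$, so the $Q$-periodic product decomposes into several pieces indexed by the divisibility of $d,m_1,m_2$ by primes dividing $u$. Nevertheless, the crucial orthogonality $\sum_{u\bmod p}A(p,u)=0$ still holds, so the same CRT argument isolates the diagonal and the diagonal main term together with the boundary bound go through with only implicit-constant changes. Balancing the Rankin cutoff against the boundary error is routine, and the choice $M=x^{2\log x}$ delivers the advertised bound.
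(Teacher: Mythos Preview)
Your proposal is correct and follows the same architecture as the paper's proof: the identity
\[
{\frak P}_\ell^{(\prime)}(u,x)-{\frak S}_\ell^{(\prime)}(u,x)=\sum_{\substack{q>x\\ q\mid P(x)}}\frac{\mu(q)}{\varphi(q)}\,\Bigl[A(q,u)\text{ or }\lambda(q,u)\Bigr],
\]
a Rankin-type tail bound beyond a threshold $V=\exp(\log^2 x)$ (your $M=x^{2\log x}$ satisfies $M^2=V^4=x^{4\log x}$, so the bookkeeping matches), expansion of the square for $x<q\le V$, and separate treatment of diagonal and off-diagonal pairs.

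The one tactical divergence is in the off-diagonal step. You factor $\lambda(q_1,u)\lambda(q_2,u)$ (resp.\ $A(q_1,u)A(q_2,u)$) by CRT and kill the complete-period sum via the local orthogonality $\sum_{u\bmod p}(\varrho_\ell(p,u)-1)=0=\sum_{u\bmod p}A(p,u)$. The paper instead opens up the exponential-sum definition of $A(q_i,u)$ and observes that $\sum_{u\le y}e\bigl((a_1q_2-a_2q_1)u/(q_1q_2)\bigr)$ has vanishing complete periods because $q_1q_2\nmid(a_1q_2-a_2q_1)$ when $q_1\ne q_2$, leaving only the boundary $\le q_1q_2$. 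The paper's route is uniform in the primed and unprimed cases and avoids the divisibility case-split you anticipate for $A(p,u)$; your CRT route is slightly more structural and delivers the explicit diagonal constant $\prod_{p\mid q}(\gcd(\ell,p-1)-1)$, but costs the extra bookkeeping you flag. Either way the diagonal is bounded by $yx^{-1}\log^{\ell^2}x$ and the off-diagonal by $x^{4\log x}$ (up to harmless polylogarithmic factors), so both arguments close.
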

\begin{proof}
We denote by $P(x)=\prod_{p\le x}p$, $S(u,x)={\frak P}_{\ell}(u,x)-{\frak S}_{\ell}(u,x)$ and let $V>x$. Clearly,
\[
S(u,x)=\sum_{\substack{x<q\le V\\
q|P(x)}}\frac{\mu(q)}{\varphi(q)}A(q,u)+\sum_{\substack{q>V\\
q|P(x)}}\frac{\mu(q)}{\varphi(q)}A(q,u).
\]
Let $\lambda_x=\log^{-1}x>0$. We have the following estimate
\begin{align*}
\sum_{\substack{q>V\\
q|P(x)}}\frac{\mu(q)}{\varphi(q)}A(q,u)&\ll \sum_{\substack{q>V\\
q|P(x)}}\frac{\mu(q)^2}{\varphi(q)}\left|A(q,u)\right|\le \frac{1}{V^{\lambda_x}}\sum_{
q|P(x)}\frac{q^{\lambda_x}\mu(q)^2}{\varphi(q)}\left|A(q,u)\right|\\
&=\frac{1}{V^{\lambda_x}}\prod_{p\le x}\left(1+\frac{p^{\lambda_x}\left|A(p,u)\right|}{p-1}\right)\le \frac{1}{V^{\lambda_x}}\prod_{p\le x}\left(1+\frac{e\left|A(p,u)\right|}{p-1}\right).
\end{align*}
Setting $V=\exp(\log^2x)$ and notice that
\begin{align*}
A(p,u)&=\begin{cases}\frac{p}{p-1}(\varrho_{\ell}(p,u)-1)-1\quad &u\equiv 0(\bmod p)\\
\frac{p}{p-1}(\varrho_{\ell}(p,u)-1)+\frac{1}{p-1}\quad &u\not\equiv 0(\bmod p)
\end{cases}
\end{align*}
we obtain
\[|A(p,u)|\le \frac{\ell p}{p-1}.\]
Therefore we get
\begin{equation*}
\sum_{u\le y}\left|\sum_{q>V,
q|P(x)}\frac{\mu(q)}{\varphi(q)}A(q,u)\right|^2\ll y\frac{1}{x^2}\prod_{p\le x}\left(1+\frac{e\ell}{p}\right)^2\ll yx^{-2}\log^{2e\ell}x.
\end{equation*}
One the other hand,
\begin{align*}
\sum_{u\le y}\left|\sum_{\substack{x<q\le V\\
q|P(x)}}\frac{\mu(q)}{\varphi(q)}A(q,u)\right|^2&=\sum_{u\le y}\ssum_{\substack{x<q_1,q_2\le V\\ q_1,q_2|P(x)}}\frac{\mu(q_1)\mu(q_2)}{\varphi(q_1)\varphi(q_2)}A(q,u)\overline{A(q,u)}\\
&=\sum_{u\le y}\sum_{\substack{x<q\le V\\ q|P(x)}}\frac{\mu(q)^2}{\varphi(q)^2}\left|A(q,u)\right|^2+T_R(x)
\end{align*}
with
\begin{align*}
T_R(x)&=\ssum_{\substack{x<q_1\neq q_2\le V\\ q_1,q_2|P(x)}}\frac{\mu(q_1)\mu(q_2)}{\varphi(q_1)^2\varphi(q_2)^2}\ssum_{\substack{a_i\in\zb_{q_i}^*\\i=1,2}}\ssum_{\substack{h_j\in\zb_{q_j}^*\\j=1,2}}e\left(\frac{a_1h_1^{\ell}}{q_1}-\frac{a_2h_2^{\ell}}{q_2}\right)\sum_{u\le y}e\left(\frac{a_1q_2-a_2q_1}{q_1q_2}u\right)\\
&\quad\le\ssum_{\substack{x<q_1\neq q_2\le V\\ q_1,q_2|P(x)}}\frac{\mu(q_1)^2\mu(q_2)^2}{\varphi(q_1)^2\varphi(q_2)^2}\ssum_{\substack{a_i\in\zb_{q_i}^*\\i=1,2}}\ssum_{\substack{h_j\in\zb_{q_j}^*\\j=1,2}}\left(\sum_{\lfloor({q_1q_2})^{-1}{y}\rfloor q_1q_2<u\le y}1\right)\\
&\quad\le\ssum_{\substack{x<q_1\neq q_2\le V\\ q_1,q_2|P(x)}}q_1q_2\le \left(\sum_{x<q\le V,q|P(x)}q\right)^2\ll V^4\le x^{4\log x},
\end{align*}
where the obvious fact $q_1q_2\nmid (a_1q_2-a_2q_1)$ has been used. Moreover,
\begin{align*}
\sum_{\substack{x<q\le V\\ q|P(x)}}\frac{\mu(q)^2}{\varphi(q)^2}\left|A(q,u)\right|^2
&\ll x^{-1}\sum_{\substack{x<q\le V\\ q|P(x)}}\frac{\mu(q)^2q}{\varphi(q)^2}\left|A(q,u)\right|^2\ll x^{-1}\sum_{ q|P(x)}\frac{\mu(q)^2q}{\varphi(q)^2}\left|A(q,u)\right|^2\\
&\ll x^{-1}\prod_{p\le x}\left(1+\frac{p|A(p,u)|^2}{(p-1)^2}\right)\ll x^{-1}\log^{\ell^2} x.
\end{align*}
Hence we obtain that
\[\sum_{u\le y}\left|S(u,x)\right|^2\ll_{\ell} yx^{-1}\log^{\ell^2} x+x^{4\log x}+yx^{-2}\log^{2e\ell}x\ll_{\ell} yx^{-1}\log^{\ell^2} x+x^{4\log x}.\]
Similarly,
\[\sum_{u\le y}\left|{\frak P}_{\ell}^{\prime}(u,x)-{\frak S}_{\ell}^{\prime}(u,x)\right|^2\ll_{\ell} yx^{-1}\log^{\ell^2} x+x^{4\log x}.\]
Which completes the proof of the lemma.
\end{proof}
Under Lemma \ref{t52}, we have the following estimate for ${\frak S}_{\ell}(u,L^B)$.
\begin{lemma}\label{t53}Let $y\le X$ be sufficiently large.  We have
\[
\sum_{L^{2B}<u\le y}\left|{\frak S}_{\ell}(u,L^B)-{\frak S}_{\ell}(u)\right|^2\ll_{\ell} yL^{-B/(2000\ell^2)+10\ell+3}.
\]
\end{lemma}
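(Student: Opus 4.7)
The plan is to reduce $|\mathfrak{S}_\ell(u,L^B) - \mathfrak{S}_\ell(u)|$ to quantities already controlled by Lemma \ref{t21} (which handles $\mathfrak{S}_\ell'$) and Lemma \ref{t52} (which compares truncated Euler products with truncated singular series). First, apply the triangle inequality
\[
|\mathfrak{S}_\ell(u,L^B) - \mathfrak{S}_\ell(u)|^2 \ll |\mathfrak{S}_\ell(u,L^B) - \mathfrak{P}_\ell(u,L^B)|^2 + |\mathfrak{P}_\ell(u,L^B) - \mathfrak{S}_\ell(u)|^2.
\]
Summing the first piece over $u \le y$ is handled directly by Lemma \ref{t52} with $x = L^B$, which produces an error of order $yL^{-B}(\log L)^{\ell^2} + L^{O(B^2\log L)}$; in our regime $y$ is large enough that the second term is negligible.

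For the second piece, introduce the multiplicative correction $D(u,z) = \mathfrak{P}_\ell(u,z)/\mathfrak{P}_\ell'(u,z)$. A short Euler product calculation shows
\[
D(u,z) = \prod_{\substack{p\mid u \\ p\le z}}\frac{p-\varrho_\ell(p,u)}{p-1-\varrho_\ell(p,u)}\prod_{p\le z}\left(1 - \frac{\varrho_\ell(p,u)}{(p-1)(p-\varrho_\ell(p,u))}\right),
\]
where each factor in the second product is $1 + O_\ell(p^{-2})$, so $D(u,z) \to D(u)$ absolutely as $z \to \infty$ and $\mathfrak{S}_\ell(u) = D(u)\mathfrak{S}_\ell'(u)$. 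Using the factorization,
\[
\mathfrak{P}_\ell(u,L^B) - \mathfrak{S}_\ell(u) = D(u,L^B)\bigl[\mathfrak{P}_\ell'(u,L^B) - \mathfrak{S}_\ell'(u)\bigr] + \bigl[D(u,L^B) - D(u)\bigr]\mathfrak{S}_\ell'(u).
\]
Standard bounds on $\sum_{p\mid u} 1/p$ together with Lemma \ref{t22} give $|D(u,L^B)| \ll_\ell L^{O_\ell(1)}$, while a tail estimate (using that very few primes $p > L^B$ can divide $u \le X$) yields $|D(u,L^B) - D(u)| \ll_\ell L^{O_\ell(1) - B}$. Combined with the bound $|\mathfrak{S}_\ell'(u)| \ll L^{5\ell}$ from Lemma \ref{t51}, the second bracket contributes negligibly.

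What remains is to bound $\sum_{u \le y} |\mathfrak{P}_\ell'(u,L^B) - \mathfrak{S}_\ell'(u)|^2$ on average. A further triangle inequality through $\mathfrak{S}_\ell'(u,L^B)$ reduces this to two estimates already in hand: the primed form of Lemma \ref{t52} controls $\sum |\mathfrak{P}_\ell'(u,L^B) - \mathfrak{S}_\ell'(u,L^B)|^2 \ll yL^{-B + o(1)}$, while Lemma \ref{t21} (taken with $v=1$ and summed over dyadic blocks in $n$) controls $\sum |\mathfrak{S}_\ell'(u,L^B) - \mathfrak{S}_\ell'(u)|^2 \ll y L^{-B/(2000\ell^2)}$. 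Multiplying by the $L^{O_\ell(1)}$ loss from $|D(u,L^B)|^2$ and the $L^{10\ell}$ loss from $|\mathfrak{S}_\ell'(u)|^2$, then carefully bookkeeping the polynomial-in-$L$ factors, consolidates everything into the stated exponent $-B/(2000\ell^2) + 10\ell + 3$.

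The main obstacle is handling the correction factor $D(u,L^B)$: it can grow like a small power of $L$ because of primes dividing $u$, so the saving from Lemma \ref{t21} must be shown to survive after being multiplied by $|D(u,L^B)|^2$ and by the polynomial factor $|\mathfrak{S}_\ell'(u)|^2$ coming from Lemma \ref{t51}. Everything else is fairly mechanical once the factorization is in place.
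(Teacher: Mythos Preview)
Your proposal is correct and follows essentially the same route as the paper: pass through $\mathfrak{P}_\ell(u,L^B)$ via Lemma~\ref{t52}, factor $\mathfrak{P}_\ell = D\cdot \mathfrak{P}_\ell'$ (the paper calls your $D$ by the name $f_\ell$), control the correction factor and its tail $D(u,L^B)-D(u)$, then reduce to the primed series via Lemma~\ref{t52} again and finish with Lemma~\ref{t21} summed dyadically. One small correction: Lemma~\ref{t22} is not needed to bound $D(u,L^B)$, since the second product in $D$ has factors $1+O_\ell(p^{-2})$ and is therefore $O_\ell(1)$, while the first product is $\prod_{p\mid u}(1+O_\ell(1/p))\ll_\ell \log(|u|+2)$ by the elementary estimate $\sum_{p\mid u}1/p\ll\log\log|u|$; this sharper bound $D\ll L$ (rather than a vague $L^{O_\ell(1)}$) is what makes the bookkeeping land on the stated exponent.
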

\begin{proof} First of all, by Lemma \ref{t52} it is clear that
\[
\sum_{L^{2B}<u\le y}\left|{\frak S}_{\ell}(u,L^B)-{\frak S}_{\ell}(u)\right|^2\ll_{\delta,\ell, B} yL^{-B+1}+\sum_{L^{2B}<u\le y}\left|{\frak S}_{\ell}(u)-{\frak P}_{\ell}(u, L^B)\right|^2.
\]
Note that
\[
{\frak P}_{\ell}(u,x)={\frak P}_{\ell}^{\prime}(u,x)f_{\ell}(u,x),
\]
where
\[f_{\ell}(u,x)=\prod_{p|u,p\le x}\left(1-\frac{1}{p-\varrho_{\ell}(p,u)}\right)^{-1}
\prod_{p\le x}\left(1-\frac{\varrho_{\ell}(p,u)}{(p-1)(p-\varrho_{\ell}(p,u))}\right).\]
Let $x\rightarrow\infty$, then
\[
{\frak S}_{\ell}(u)={\frak S}_{\ell}^{\prime}(u)f_{\ell}(u),
\]
where $f_{\ell}(u)=\lim_{x\rightarrow\infty}f_{\ell}(u,x)$. It is easily seen that
\[f_{\ell}(u,x)\ll_{\ell}\log(|u|+2)\]
for all $x>0$ and integer $u\neq 0$. Hence by Lemma \ref{t51}, we obtain
\begin{align*}
\sum_{L^{2B}<u\le y}\left|{\frak P}_{\ell}(u,L^B)-{\frak S}_{\ell}(u)\right|^2&=\sum_{L^{2B}<u\le y}\left|{\frak P}_{\ell}^{\prime}(u,L^B)f_{\ell}(u,L^B)-{\frak S}_{\ell}^{\prime}(u)f_{\ell}(u)\right|^2\\
&\ll L^2\sum_{L^{2B}<u\le y}\left|{\frak P}_{\ell}^{\prime}(u,L^B)-{\frak S}_{\ell}^{\prime}(u)\right|^2+L^{10\ell}R_{f}.
\end{align*}
where it is not difficult prove that
\[R_f=\sum_{u\le y}\left|f_{\ell}(u,L^B)-f_{\ell}(u)\right|^2\ll_{\ell} yL^{-B+2}.\]
By Lemma \ref{t52}, we obtain that
\begin{align*}
\sum_{L^{2B}<u\le y}\left|{\frak P}_{\ell}^{\prime}(u,L^B)-{\frak S}_{\ell}^{\prime}(u)\right|^2\ll_{\delta,\ell, B} yL^{-B+1}+\sum_{L^{2B}<u\le y}\left|{\frak S}_{\ell}^{\prime}(u)-{\frak S}_{\ell}^{\prime}(u, L^B)\right|^2.
\end{align*}
Then, the following is obvious by Lemma \ref{t21}.
\end{proof}
Finally, using Lemma \ref{t53} and setting $B=\max(2000\ell^2(12\ell+A), 2^{\ell}(10\ell+A)+c(\ell))$ in \eqref{eq52} completes the proof of Lemma \ref{t32}.

%\bibliographystyle{unsrt}
%\bibliography{test}

\bigskip
\noindent
{\sc Department of Mathematics, East China Normal University\\
500 Dongchuan Road, Shanghai 200241, PR China}\newline
\href{mailto:nianhongzhou@outlook.com}{\small nianhongzhou@outlook.com}
\end{document}